\newcommand{\Z}{\mathbb{Z}}
\newcommand{\Q}{\mathbb{Q}}
\newcommand{\C}{\mathbb{C}}
\newtheorem{theorem}{Theorem}[section]
\newtheorem{lemma}[theorem]{Lemma}
\newtheorem{corollary}{Corollary}[theorem]
\newtheorem{proposition}[theorem]{Proposition}
\theoremstyle{definition}
\newtheorem{definition}{Definition}[section]
\DeclareMathOperator{\GL}{GL}
\DeclareMathOperator{\Rep}{Rep}
\DeclareMathOperator{\im}{im}
\DeclareMathOperator{\Sh}{Sh}
\DeclareMathOperator{\Hom}{Hom}
\DeclareMathOperator{\Fun}{Fun}
\DeclareMathOperator{\vect}{vect}
\DeclareMathOperator{\End}{End}
\DeclareMathOperator{\Ind}{Ind}
\DeclareMathOperator{\cInd}{c-Ind}
\DeclareMathOperator{\Supp}{Supp}
\DeclareMathOperator{\Res}{Res}
\DeclareMathOperator{\Ann}{Ann}
\begin{document}

\title{The Bernstein Center of a $p$-adic Unipotent Group}
\author{Justin Campbell \\ University of Michigan, Ann Arbor}
\date{16 March 2011}
\maketitle

\begin{abstract}

Fran\c cois Rodier proved that it is possible to view smooth representations of certain totally disconnected abelian groups (the underlying additive group of a finite-dimensional $p$-adic vector space, for example) as sheaves on the Pontryagin dual group. For nonabelian totally disconnected groups, the appropriate dual space necessarily includes representations which are not one-dimensional, and does not carry a group structure. The general definition of the topology on the dual space is technically unwieldy, so we provide three different characterizations of this topology for a large class of totally disconnected groups (which includes, for example, $p$-adic unipotent groups), each with a somewhat different flavor. We then use these results to demonstrate some formal similarities between smooth representations and sheaves on the dual space, including a concrete description of the Bernstein center of the category of smooth representations.

\end{abstract}

\section{Introduction}

\label{intro}

This section constitutes a survey of our main results: proofs are omitted for ease of reading, as are technical points which are not critical to the main storyline. The remaining sections, with the exception of the appendix, follow the ordering of the introduction with proofs and the necessary details included.

The main goal of this paper is to prove the following theorem. Let $G$ be a $p$-adic unipotent group and consider the Bernstein center of the category of smooth representations of $G$, which we denote by $\mathscr{Z}(G)$ (all of the relevant definitions are given below). Write $\widehat{G}$ for the space of isomorphism classes of irreducible smooth representations of $G$, equipped with the Fell topology, and $C^{\infty}(\widehat{G}) \subset \Fun(\widehat{G})$ for the $\C$-algebras of locally constant and arbitrary $\C$-valued functions on $\widehat{G}$. A suitable version of Schur's lemma tells us that there is a natural algebra homomorphism $\mathscr{Z}(G) \to \Fun(\widehat{G})$, and the results of section \ref{stalkstatements} on ``stalks" of representations may be used to show that this homomorphism is injective. We shall prove that the image of $\mathscr{Z}(G)$ is precisely $C^{\infty}(\widehat{G})$, which is stated in a slightly more general form as Theorem \ref{bernstein} below.

\subsection{Acknowledgements}

First and foremost, I would like to thank Mitya Boyarchenko for his guidance throughout the writing of this paper. The main results were conjectured by him, and in many cases the technical results and the key ideas of proofs are due to him as well. I also express my gratitude to the NSF for funding my summer REU project which eventually gave rise to this paper, and the University of Michigan Mathematics Department for their support of undergraduate research. I would especially like to thank Stephen DeBacker for his encouragement.

\subsection{Basic notions and results}

We aim to study the representation theory of $p$-adic unipotent groups, but for most of this paper we work with totally disconnected topological groups under a few other hypotheses, introduced below, so as not to limit the applicability of our results. First we introduce some convenient terminology.

\begin{definition}

By an $\ell$-\emph{space} we shall mean a Hausdorff, locally compact, and totally disconnected topological space. An $\ell$-\emph{group} is a topological group whose underlying space is an $\ell$-space.

\end{definition}

Recall the following technically indispensable characterization of $\ell$-groups: a Hausdorff topological group $G$ is an $\ell$-group if and only if the identity element of $G$ has a basis of neighborhoods consisting of compact open subgroups. Clearly discrete groups are $\ell$-groups, and compact $\ell$-groups are just profinite groups.

\begin{definition}

Let $G$ be an $\ell$-group and $(\pi,V)$ a complex representation of $G$, meaning a complex vector space $V$ and a homomorphism $\pi : G \to GL(V)$. We shall say that $(\pi,V)$ is \emph{smooth} provided that the action map $G \times V \to V$ is continuous when $V$ is given the discrete topology. If, in addition, the subspace of $V$ fixed by any compact open subgroup of $G$ is finite-dimensional, we call $(\pi,V)$ \emph{admissible}.

\end{definition}

Note that a complex representation $(\pi,V)$ of $G$ is smooth if and only if the map $g \mapsto \pi(g)(v)$ is locally constant for every $v \in V$, which is to say that any vector in $V$ has an open stabilizer in $G$. In the sequel we may abuse terminology by referring to smooth representations simply as representations, since we are only concerned with smooth representations in this paper. We write $\mathcal{R}(G)$ for the category of smooth representations of an $\ell$-group $G$, a full abelian subcategory of the category $\Rep(G)$ of all complex representations of $G$. Recall that if $K$ is a compact $\ell$-group, then the category $\mathcal{R}(K)$ is semisimple.

Given any complex representation $(\rho,W)$ of $G$ (not necessarily smooth), we write $W^{sm}$ for the smooth subrepresentation of $W$ consisting of vectors whose stabilizer is open in $G$, called the \emph{smooth part of} $W$. It is not hard to see that $W \mapsto W^{sm}$ extends to a functor $\Rep(G) \to \mathcal{R}(G)$ which is right adjoint to the inclusion functor $\mathcal{R}(G) \to \Rep(G)$. Now if $(\pi,V)$ is a smooth representation of $G$, the usual dual representation $V^*$ is not necessarily smooth, so we define the \emph{smooth dual of} $(\pi,V)$ to be $V^{\vee} = (V^*)^{sm}$. The usefulness of the admissibility condition is partially explained by the following easy result.

\begin{proposition}

A smooth representation $(\pi,V)$ is admissible if and only if the natural monomorphism $V \to (V^{\vee})^{\vee}$ is an isomorphism.

\label{admissible}

\end{proposition}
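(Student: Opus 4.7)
The plan is to reduce the claim to the classical fact that a vector space $W$ is finite-dimensional if and only if the evaluation map $W \to W^{**}$ is an isomorphism, applied compact-open-subgroup by compact-open-subgroup to the isotypic pieces $V^K$.

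The first step is to establish, for each compact open subgroup $K \leq G$, a natural identification $(V^\vee)^K \cong (V^K)^*$. Since $K$ is compact, averaging with respect to normalized Haar measure produces a $K$-equivariant projector $e_K : V \to V^K$ whose kernel $V(K)$ satisfies $V = V^K \oplus V(K)$. A functional $\lambda \in V^*$ is $K$-fixed precisely when it vanishes on $V(K)$, so restriction to $V^K$ gives a natural isomorphism $(V^*)^K \xrightarrow{\sim} (V^K)^*$ with inverse $\mu \mapsto \mu \circ e_K$. Moreover $(V^\vee)^K = (V^*)^K$ because any $K$-fixed functional on $V$ is automatically smooth (its stabilizer contains $K$). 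Iterating, $\bigl((V^\vee)^\vee\bigr)^K \cong \bigl((V^\vee)^K\bigr)^* \cong (V^K)^{**}$.

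Next I would verify that the canonical map $V \to (V^\vee)^\vee$, $v \mapsto \mathrm{ev}_v$, carries $V^K$ into $\bigl((V^\vee)^\vee\bigr)^K$ and, under the identification above, coincides with the classical double-duality map $V^K \to (V^K)^{**}$. That $\mathrm{ev}_v$ is $K$-fixed when $v \in V^K$ is immediate; conversely, writing any $\varphi \in (V^\vee)^K$ as $\mu \circ e_K$ for a unique $\mu \in (V^K)^*$, we have $\mathrm{ev}_v(\varphi) = \mu(e_K(v)) = \mu(v)$ for $v \in V^K$, so the restriction of $\mathrm{ev}$ to $V^K$ is the standard evaluation into $(V^K)^{**}$.

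Finally, since $V = \bigcup_K V^K$ by smoothness and likewise $(V^\vee)^\vee = \bigcup_K \bigl((V^\vee)^\vee\bigr)^K$, the natural map $V \to (V^\vee)^\vee$ is an isomorphism if and only if $V^K \to (V^K)^{**}$ is an isomorphism for every compact open $K$, which by the classical fact happens if and only if every $V^K$ is finite-dimensional, i.e.\ $(\pi,V)$ is admissible. The only step requiring any real care is the first one: showing that every $K$-fixed functional on $V$ vanishes on $V(K)$, which depends essentially on the existence of the Haar-averaging projector and hence on the compactness of $K$. Once this identification is in hand, the argument is purely formal.
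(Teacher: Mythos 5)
Your argument is correct. The paper does not actually supply a proof of this proposition (it is stated as a known, ``easy result'' in the introductory survey section and is not revisited in the later sections), so there is nothing internal to compare against, but your proof is the standard one and it is complete: the key identification $(V^\vee)^K \cong (V^K)^*$ via the Haar-averaging idempotent $e_K$, the observation that a $K$-fixed functional is automatically smooth, the check that under this identification $\mathrm{ev}$ restricts to the classical biduality map $V^K \to (V^K)^{**}$, and the reduction of ``$V \to (V^\vee)^\vee$ is an isomorphism'' to the same statement for each $V^K$ by exhausting $V$ over compact open subgroups. One small point worth spelling out (you allude to it but do not prove it) is that a $K$-fixed $\lambda \in V^*$ does indeed vanish on $V(K) = \ker e_K$: for a $K$-fixed $\lambda$ one has $\lambda = \lambda \circ e_K$ by a change of variable in the Haar integral, so $\lambda$ kills $\ker e_K$. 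With that noted, the proof is airtight.
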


The additive group $(\Q_p,+)$ is an important example of an $\ell$-group. The multiplicative group $(\Q_p \setminus \{ 0 \}, \cdot)$ is also an $\ell$-group, but lacks a useful property: it doesn't have enough compact open subgroups ``at the top."

\begin{definition}

An $\ell_c$-\emph{group} is an $\ell$-group which is a filtered union of its compact open subgroups. This means that every element of an $\ell_c$-group is contained in a compact open subgroup, and that any two compact open subgroups are contained in a third.

\end{definition}

For example, $\Z_p \subset p^{-1}\Z_p \subset p^{-2}\Z_p \subset \cdots$ is a filtration of $(\Q_p,+)$ by compact open subgroups. By taking (topological group) products it is easy to see that the additive group of any finite-dimensional $p$-adic vector space $V$ is an $\ell_c$-group. Recall that the Pontryagin dual of $V$ is noncanonically isomorphic to $V$ as a topological group, and more generally, the Pontryagin dual of an abelian $\ell_c$-group is totally disconnected (therefore an $\ell$-group). The next theorem tells us that smooth representations of such a group can be viewed as sheaves of complex vector spaces on its Pontryagin dual in a natural way.

\begin{theorem}[Rodier]

Let $G$ be an abelian $\ell_c$-group. Then the category $\mathcal{R}(G)$ is equivalent to the category $\Sh(\widehat{G})$ of sheaves of complex vector spaces on the Pontryagin dual of $G$.

\label{rodier}

\end{theorem}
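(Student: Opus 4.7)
The plan is to identify both categories as categories of non-degenerate modules over appropriate idempotented algebras and then connect those two algebras by the Fourier transform. First, I would invoke the standard equivalence between $\mathcal{R}(G)$ and the category of non-degenerate (``smooth'') modules over the Hecke algebra $\mathcal{H}(G) := (C_c^\infty(G), *)$ of compactly supported locally constant functions on $G$ under convolution (for a fixed Haar measure). The idempotents are the normalized indicators $e_K = \mathrm{vol}(K)^{-1} \mathbf{1}_K$ for compact open subgroups $K \subset G$, and a module $M$ is non-degenerate exactly when $\mathcal{H}(G) \cdot M = M$, which corresponds precisely to smoothness of the associated representation.

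In parallel, for any $\ell$-space $X$ the category $\Sh(X)$ of sheaves of complex vector spaces on $X$ is equivalent to the category of non-degenerate modules over $(C_c^\infty(X), \cdot)$, where the product is now pointwise multiplication and the idempotents are indicator functions of compact open subsets; the functor sending a sheaf $\mathcal{F}$ to its compactly supported global sections $\Gamma_c(X, \mathcal{F})$ realizes the equivalence, with inverse sending a module $M$ to the sheaf $U \mapsto \mathbf{1}_U \cdot M$ on compact opens, then sheafified. This is a totally disconnected analog of the equivalence between quasi-coherent sheaves on an affine scheme and modules over its coordinate ring.

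The two module categories are linked by the Fourier transform. Taking $X = \widehat{G}$ and defining $\widehat{f}(\chi) = \int_G f(g) \overline{\chi(g)} \, dg$, the convolution theorem makes this a homomorphism $(C_c^\infty(G), *) \to (\Fun(\widehat{G}), \cdot)$, and Fourier inversion gives a two-sided inverse onto its image. That $\widehat{f}$ actually lies in $C_c^\infty(\widehat{G})$ is where the $\ell_c$ hypothesis intervenes crucially: since $G$ is $\ell_c$, any $f \in C_c^\infty(G)$ is bi-invariant under some compact open subgroup $K$ and contained in some compact open subgroup $K' \supset \Supp(f)$, forcing $\widehat{f}$ to be supported in $K^\perp$ and invariant under $(K')^\perp$, both compact open subgroups of $\widehat{G}$.

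Composing the three resulting equivalences yields $\mathcal{R}(G) \simeq \Sh(\widehat{G})$. The main obstacle, as I see it, is the setup of step two: one has to pin down exactly what ``sheaf of complex vector spaces on an $\ell$-space'' means so that the module equivalence is clean and functorial, and verifying the sheaf axioms for the inverse functor is where most of the real work lives. By contrast, the Hecke algebra equivalence of step one is standard, and the Fourier algebra isomorphism of step three reduces to the classical convolution theorem together with the short computation identifying supports and invariance subgroups described above.
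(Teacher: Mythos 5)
Your proposal is correct and follows essentially the same path as the paper: $\mathcal{R}(G)\simeq$ nondegenerate $\mathcal{H}(G)$-modules, then the Fourier transform algebra isomorphism $\mathcal{H}(G)\cong C_c^\infty(\widehat{G})$ (with the $\ell_c$ hypothesis forcing compact support and local constancy of $\widehat{f}$, exactly as you outline), and finally the equivalence between nondegenerate $C_c^\infty(X)$-modules and $\Sh(X)$ for an $\ell$-space $X$, which the paper isolates as its Theorem \ref{modsheaf} and which you rightly identify as the locus of the real work. The only cosmetic difference is that the paper defines the inverse functor via quotients $M/\mathfrak{m}_K M$ rather than your subspaces $\mathbf{1}_K\cdot M$ and observes the presheaf is already a sheaf (no sheafification needed, since compact open covers of compact opens admit finite disjoint refinements); these descriptions agree since $\mathbf{1}_K M\to M/\mathfrak{m}_K M$ is an isomorphism for nondegenerate $M$.
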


Fran\c cois Rodier proved this remarkable theorem in \cite{Rod}. Notice that the category $\Sh(\widehat{G})$ depends only on the underlying topological space of $\widehat{G}$. The appendix to this paper covers some of the same material and includes an alternate proof of this result (in English).

\subsection{$p$-adic unipotent groups}

A more interesting (in particular, nonabelian) class of $\ell_c$-groups are the $p$-\emph{adic unipotent groups}, which are constructed as follows: let $\mathfrak{g}$ be a finite-dimensional nilpotent Lie algebra over $\Q_p$. The corresponding $p$-adic unipotent group $G$ has the same underlying topological space as $\mathfrak{g}$, with group operation given by the Campbell-Hausdorff formula. More explicitly: for psychological reasons let $\exp : \mathfrak{g} \to G$ and $\log : G \to \mathfrak{g}$ denote the identity map, and then the group operation is \[ \log(\exp(x) \cdot \exp(y)) = x + y + \frac12 [x,y] + \frac{1}{12} [x,[x,y]] - \frac{1}{12} [y,[x,y]] - \cdots \ \ \ \ x,y \in \mathfrak{g}. \] There is no convergence issue because $\mathfrak{g}$ is nilpotent.

More abstractly, a unipotent algebraic group $\mathbb{G}$ over $\Q_p$ gives rise to a $p$-adic unipotent group $G = \mathbb{G}(\Q_p)$, the group of $\Q_p$-points endowed with the $p$-adic topology. Then we can recover $\mathfrak{g}$ in the usual way as the tangent space of $\mathbb{G}$ at the identity under the Lie bracket.

The simplest class of nonabelian $p$-adic unipotent groups are the $p$-\emph{adic Heisenberg groups}. Let $V$ be a finite-dimensional $p$-adic vector space equipped with a symplectic form \[ \omega : V \times V \to \Q_p, \] so that in particular $V$ has even dimension. As a topological space, $G = V \times \Q_p$. The group operation on $G$ is given by the formula \[ (v,a) \cdot (w,b) = (v+w,a+b+\tfrac12 \omega(v,w)). \]

If we let $Z$ denote the center of $G$, which is isomorphic to $\Q_p$ as a topological group, it is not hard to see that $G$ is a central extension of the additive group of $V$ by $Z$: \[ 1 \to Z \to G \to V \to 1. \]

All the irreducible smooth representations of an abelian $\ell_c$-group are one-dimensional, which we prove in the appendix to this paper, so in particular they are admissible. In fact, it is proved in \cite{Rod} that $p$-adic unipotent groups also have this property, which is so useful that we will give it a name for our convenience.

\begin{definition}

We call an $\ell$-group \emph{tame} if all of its irreducible smooth representations are admissible.

\end{definition}

The proof that abelian $\ell_c$-groups are tame is outlined in the appendix to this paper, and Rodier proved that $p$-adic unipotent groups are tame. It should be noted that tameness implies Schur's lemma by an argument in \cite{Pra}.

\begin{proposition}

If $G$ is a tame $\ell$-group and $\pi$ is an irreducible smooth representation of $G$, then the only linear endomorphisms of $\pi$ which commute with the action of $G$ are the scalars.

\label{schur}

\end{proposition}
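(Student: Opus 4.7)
The plan is to reduce Schur's lemma to a finite-dimensional eigenvalue computation by exploiting admissibility. Concretely, suppose $T \colon \pi \to \pi$ is a linear endomorphism commuting with the $G$-action, and pick any nonzero vector $v \in \pi$. Since $\pi$ is smooth, $v$ has an open stabilizer, and so by the characterization of $\ell$-groups recalled early in the paper, there is a compact open subgroup $K \subseteq G$ with $v \in \pi^K$.

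The key observation is that $\pi^K$ is $T$-stable: if $w \in \pi^K$ and $k \in K$, then $k \cdot T(w) = T(k \cdot w) = T(w)$, so $T(w) \in \pi^K$. Now tameness enters: since $\pi$ is irreducible and $G$ is tame, $\pi$ is admissible, so $\pi^K$ is finite-dimensional. The restriction $T|_{\pi^K}$ is thus an endomorphism of a finite-dimensional nonzero $\C$-vector space, hence has an eigenvalue $\lambda \in \C$ with some nonzero eigenvector $w_0 \in \pi^K$.

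Consider $S = T - \lambda \cdot \id_\pi$. This is again $G$-equivariant, and its kernel contains $w_0$, so $\ker S$ is a nonzero $G$-subrepresentation of $\pi$. By irreducibility, $\ker S = \pi$, i.e., $T = \lambda \cdot \id_\pi$, as required.

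The main (indeed only) obstacle is justifying that $T$ admits an eigenvalue somewhere, which is precisely what would fail for a general smooth irreducible representation of infinite dimension over $\C$; admissibility of $\pi$ is exactly the hypothesis needed to pin down a finite-dimensional $T$-invariant subspace, and this is why tameness was isolated as a hypothesis in the preceding definition. Everything else is formal manipulation of $G$-equivariance together with the fact that nonzero $G$-subrepresentations of an irreducible representation are all of $\pi$.
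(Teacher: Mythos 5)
Your proof is correct. The paper does not give a self-contained argument here but simply cites \cite{Pra}, and the argument you supply is the standard one: tameness gives admissibility, hence a nonzero finite-dimensional $T$-stable subspace $\pi^K$, an eigenvalue for $T|_{\pi^K}$, and then irreducibility forces $T$ to be that scalar. This matches what the cited reference does, so it is essentially the same approach.
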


\subsection{The dual space and its Fell topology}

It is natural to ask if Theorem \ref{rodier} has some sort of generalization to $p$-adic unipotent groups or, more generally, tame $\ell_c$-groups. The first step is to find the appropriate dual space.

\begin{definition}

The \emph{smooth dual space} $\widehat{G}$ of an $\ell$-group $G$ is the space of isomorphism classes of irreducible smooth representations of $G$. 

\end{definition}

For abelian $\ell_c$-groups, $\widehat{G}$ coincides with the Pontryagin dual and in particular carries a natural group structure. Recall that the additive group $V$ of a finite-dimensional $p$-adic vector space can be noncanonically identified with $\widehat{V}$ as a topological group. Even for nonabelian groups, there is a canonical topology on $\widehat{G}$ which generalizes the ``compact-open" topology usually employed in Pontryagin duality, called the \emph{Fell topology}. The Fell topology is unpleasant to describe in general, but the interested reader can refer to the beginning of Section \ref{topology}. This brings us to our first original theorem: a more understandable characterization of the Fell topology in the cases we are interested in.

\begin{theorem}

Let $G$ be a tame $\ell_c$-group. For any compact open subgroup $K \subset G$ and $\rho \in \widehat{K}$, put \[ \mathscr{V}(K,\rho) = \{ \pi \in \widehat{G} \ | \ \Hom_K(\pi|_K,\rho) \neq 0 \}. \] Then the $\mathscr{V}$'s are open and form a basis for the Fell topology on $\widehat{G}$, meaning every open set in $\widehat{G}$ can be written as a union of $\mathscr{V}$'s.

\label{Fell}

\end{theorem}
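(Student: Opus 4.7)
The plan is to identify the Fell topology on $\widehat{G}$ with the hull-kernel (Jacobson) topology on the primitive ideal spectrum of the Hecke algebra $\mathcal{H}(G) = C_c^\infty(G)$. Under tameness, Schur's lemma (Proposition \ref{schur}) makes the annihilator map $\pi \mapsto \Ann(\pi) \subset \mathcal{H}(G)$ embed $\widehat{G}$ into the primitive spectrum, and a preliminary task is to check that the Fell topology (as defined in Section \ref{topology}) coincides with the pullback of the hull-kernel topology, whose basic open sets are $O_f = \{\pi \in \widehat{G} : \pi(f) \neq 0\}$ for $f \in \mathcal{H}(G)$. The key observation is that $\mathscr{V}(K, \rho) = O_{e_\rho}$, where $e_\rho \in \mathcal{H}(G)$ is the standard idempotent supported on $K$ which acts on any smooth $K$-representation as projection onto the $\rho$-isotypic component: indeed $\Hom_K(\pi|_K, \rho) \neq 0$ iff the $\rho$-isotypic of $\pi|_K$ is nonzero iff $\pi(e_\rho) \neq 0$. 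Openness of $\mathscr{V}(K, \rho)$ is then immediate.

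For the covering property, every $\pi \in \widehat{G}$ lies in some $\mathscr{V}(K, \rho)$: by smoothness pick a compact open subgroup $K$ fixing a nonzero vector of $\pi$; the $K$-module $\pi|_K$ is nonzero and semisimple (since $K$ is profinite), hence contains some irreducible $\rho \in \widehat{K}$, witnessing $\pi \in \mathscr{V}(K, \rho)$.

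The substance of the theorem is the basis property: given any basic open $O_f \ni \pi_0$, one must produce $(K, \rho)$ with $\pi_0 \in \mathscr{V}(K, \rho) \subseteq O_f$. The strategy is to use the $\ell_c$ hypothesis to choose $K$ large enough that $f$ is $K$-bi-invariant, so that $f$ lies in the Hecke subalgebra $\mathcal{H}(G, K)$ and $\pi(f)$ factors through the $K$-fixed subspace $\pi^K$. Tameness now supplies a crucial finite-dimensional reduction: $\pi_0^K$ is a finite-dimensional irreducible $\mathcal{H}(G, K)$-module on which $f$ acts nontrivially, so by the Jacobson density theorem $\mathcal{H}(G, K)$ surjects onto $\End(\pi_0^K)$. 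Decomposing the unit of $\mathcal{H}(G,K)$ as a finite sum $\sum_{\rho} e_\rho$, the nonvanishing $\pi_0(f) \neq 0$ isolates a $K$-type $\rho$ of $\pi_0$ for which $\pi_0(e_\rho f e_\rho) \neq 0$; one then further shrinks $K$ (using the $\ell_c$ structure) and refines $\rho$ so that the rigidity imposed by admissibility forces $\pi(f) \neq 0$ for every $\pi \in \mathscr{V}(K, \rho)$, via the idempotented corner $e_\rho \mathcal{H}(G) e_\rho$.

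I expect the main obstacle to lie in this last refinement. The naive choice --- take $K$ so $f$ is $K$-bi-invariant and $\rho$ any $K$-type of $\pi_0$ --- generally fails to give $\mathscr{V}(K, \rho) \subseteq O_f$, since distinct irreducibles may share $K$-types for any fixed $K$, and $f$ may act nontrivially on $\pi_0$ but trivially on another such $\pi$. Resolving this requires using tameness and the filtered-compact structure to pass to a sufficiently small Hecke corner $\mathcal{H}(G, K, \rho) = e_\rho \mathcal{H}(G) e_\rho$, where the admissibility of irreducibles containing $\rho$ provides enough rigidity to guarantee $\pi(f) \neq 0$ uniformly on $\mathscr{V}(K, \rho)$.
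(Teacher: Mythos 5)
Your proposal hinges on a ``preliminary task'' --- identifying the Fell topology on $\widehat{G}$ with the pullback of the hull--kernel topology on the primitive spectrum of $\mathcal{H}(G)$ --- that is not preliminary at all. That identification is, in the paper's local form via the corners $\mathcal{H}(G,\rho)$, exactly Theorem \ref{fell3}, which is proved \emph{after} Theorem \ref{Fell} and depends on it through the stalk machinery and Theorem \ref{fell2}. Assuming it as given is circular relative to the paper's development, and even in the abstract it has the same content as what you are trying to prove: showing that each Fell-open set around $\pi$ contains some $O_f$ and conversely is precisely the hard direction of Theorem \ref{Fell}. The observation $\mathscr{V}(K,\rho)=O_{e_\rho}$ is correct and would give openness immediately once the identification were in hand, but the identification itself is where the work lies. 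Independently of that, the central refinement step --- producing $(K,\rho)$ with $\pi_0\in\mathscr{V}(K,\rho)\subseteq O_f$ --- is explicitly left open in your last paragraph; you name the obstruction (distinct irreducibles sharing $K$-types while $f$ acts differently on them) but offer only the slogan that ``admissibility provides enough rigidity,'' with no mechanism. Two local slips: to make $f$ bi-$K$-invariant you must take $K$ \emph{small}, not large, and this uses only the $\ell$-group structure, not $\ell_c$; and ``decomposing the unit of $\mathcal{H}(G,K)$ as $\sum_\rho e_\rho$'' does not parse, since the unit of $\mathcal{H}(G,K)$ is $e_K$ itself and $e_\rho f e_\rho=0$ for any nontrivial $\rho\in\widehat{K}$ when $f$ is bi-$K$-invariant.

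The paper bypasses the Jacobson detour and works directly with the Fell neighborhoods $\mathscr{U}(\pi,v_j,\xi_j,B,\epsilon)$. The decisive input is Lemma \ref{lemma1}: for $\pi\in\widehat{G}$ and any nonzero finite-dimensional $W\subset V_\pi$, there is a compact open $K$ such that the $K$-subrepresentation generated by $W$ is irreducible. Its proof is exactly the rigidity you are reaching for, but packaged correctly: tameness makes $\pi^L$ a finite-dimensional simple $\mathcal{H}(G,L)$-module, the $\ell_c$ hypothesis writes $\mathcal{H}(G,L)$ as the filtered union of the $\mathcal{H}(K,L)$, and finite-dimensionality of the image of $\mathcal{H}(G,L)\to\End(\pi^L)$ forces some $\mathcal{H}(K,L)$ to already surject onto it, so $\pi^L$ is a simple $\mathcal{H}(K,L)$-module and the $K$-representation it generates is irreducible. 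Given a Fell neighborhood of $\pi$ built from $v_1,\dots,v_n$, one sets $W=\Span(v_j)$, applies the lemma, enlarges $K$ to swallow the compact set $B$ (the second and genuine use of $\ell_c$), and then verifies $\mathscr{V}(K,\rho)\subseteq\mathscr{U}$ by a direct matrix-coefficient estimate. Openness of $\mathscr{V}(K,\rho)$ is likewise checked directly. You would do better to look for a statement like Lemma \ref{lemma1} than to try to force the refinement through on the Hecke-algebra side after assuming the Fell--Jacobson comparison.
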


One reason we study the representation theory of a $p$-adic unipotent group $G$ is that then $\widehat{G}$ admits a geometric description via the orbit method. Again we fix a nilpotent $p$-adic Lie algebra $\mathfrak{g}$ and let $G$ denote the corresponding $p$-adic unipotent group. Recall that $G$ acts by Lie algebra automorphisms on $\mathfrak{g}$ via conjugation (the adjoint action), hence also acts linearly on the dual vector space $\mathfrak{g}^*$ (the coadjoint action). The orbit method gives a homeomorphism \[ \mathfrak{g}^*/G \longrightarrow \widehat{G}. \] This is even more helpful when combined with the algebro-geometric result that the orbits in an affine variety of an algebraic action by a unipotent group are closed.

When $G$ is the Heisenberg group, the orbits of $G$ in $\mathfrak{g}^*$ are as follows. Write $\mathfrak{g}^*_0 \subset \mathfrak{g}^*$ for the subspace consisting of those $f \in \mathfrak{g}^*$ which vanish on the center of $\mathfrak{g}$: then every point of $\mathfrak{g}^*_0$ is fixed by $G$, and each nonzero translate (i.e. coset) of $\mathfrak{g}^*_0$ is a $G$-orbit. Under the orbit method homeomorphism $\mathfrak{g}^*_0$ corresponds to those elements of $\widehat{G}$ which are trivial on the center $Z$ of $G$, and we can identify this (topological) subspace with $\widehat{V}$, where $V$ denotes the additive group of the symplectic vector space used to construct $G$. The nonzero translates of $\mathfrak{g}^*_0$ correspond to the representations in $\widehat{G}$ which are nontrivial on $Z$, and these may be identified with $\widehat{Z} \setminus \{ 0 \}$ by a suitable version of the Stone-von Neumann theorem. So as a set $\widehat{G}$ is a disjoint union of $\widehat{V}$ and $\widehat{Z} \setminus \{ 0 \}$, endowed with a rather strange topology. In particular, if we choose a sequence of points in $\widehat{Z} \setminus \{ 0 \}$ which converges to $0$ in $\widehat{Z}$, the limit of this sequence in $\widehat{G}$ is all of $\widehat{V}$. The topology on $\widehat{G}$ is locally compact and totally disconnected, but plainly not Hausdorff.

\subsection{Stalks in the category $\mathcal{R}(G)$}

\label{stalkstatements}

For now, $G$ is a tame $\ell_c$-group, such as a $p$-adic unipotent group. It seems that $\mathcal{R}(G)$ shares many properties with $\Sh(\widehat{G})$, the category of sheaves of complex vector spaces on $\widehat{G}$, including the existence of a ``stalk" functor for each $\pi \in \widehat{G}$ that naturally returns the muliplicity (appropriately defined) of $\pi$ in smooth representations of $G$. But first, there is a meaningful notion of support for smooth representations.

\begin{definition}

The \emph{support} of a smooth representation $M \in \mathcal{R}(G)$ is defined by \[ \Supp M = \{ \pi \in \widehat{G} \ | \ \Hom_G(M,\pi) \neq 0 \}. \]

\end{definition}

For example, if $S \subset \widehat{G}$ is any subset of the dual space, then $\Supp \oplus_{\pi \in S} \pi = S$, but in general irreducible representations may be combined in more complicated ways than direct sum. Let's write $\mathcal{R}(G)_S$ for the full subcategory of smooth representations of $G$ with support contained in $S$. If $S = \{ \pi \}$ is a singleton we'll abuse notation and denote $\mathcal{R}(G)_S$ by $\mathcal{R}(G)_{\pi}$.

The next result gives a simple description of $\mathcal{R}(G)_{\pi}$ via an analogue of the skyscraper sheaf construction: let $\C-\mathbf{vect}$ denote the category of complex vector spaces.

\begin{theorem}

Fix a point $\pi \in \widehat{G}$. The functor $\C-\mathbf{vect} \longrightarrow \mathcal{R}(G)$ which sends $V \mapsto V_G \otimes \pi$, where $V_G$ denotes the trivial representation of $G$ on $V$, is an equivalence onto $\mathcal{R}(G)_{\pi}$.

\label{skyscraper}

\end{theorem}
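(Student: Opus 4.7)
The plan is to realize $F\colon V \mapsto V_G \otimes \pi$ as the left adjoint of $H := \Hom_G(\pi,-)$ and verify that both the unit $\eta$ and the counit $\varepsilon$ of this adjunction are isomorphisms on the appropriate subcategories. The adjunction $\Hom_G(V_G \otimes \pi, M) \cong \Hom_\C(V, \Hom_G(\pi, M))$ is the standard tensor-hom adjunction, using that $G$ acts trivially on $V$; to see $F$ factors through $\mathcal{R}(G)_\pi$, I observe that for $\pi' \not\cong \pi$ the right-hand side vanishes by Schur's lemma (Proposition \ref{schur}), which is available because $G$ is tame.

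For full faithfulness of $F$ it suffices to show the unit $\eta_V \colon V \to \Hom_G(\pi, V_G \otimes \pi)$, $v \mapsto (w \mapsto v \otimes w)$, is an isomorphism. Injectivity is immediate. For surjectivity, given $\phi \colon \pi \to V_G \otimes \pi$, fix a basis $\{e_i\}_{i \in I}$ of $V$ and any nonzero $w_0 \in \pi$: then $\phi(w_0)$ is supported in finitely many basis directions $e_{i_1}, \ldots, e_{i_n}$, and because $\pi$ is irreducible and $\phi$ is $G$-equivariant, the whole image $\phi(\pi)$ lies in $\Span(e_{i_1}, \ldots, e_{i_n}) \otimes \pi \cong \pi^n$. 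Schur's lemma applied to each component projection then identifies $\phi$ with $\eta_V(\sum_k c_k e_{i_k})$ for suitable scalars $c_k$.

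The remaining task is essential surjectivity, namely that the counit $\varepsilon_M \colon \Hom_G(\pi,M) \otimes \pi \to M$, $f \otimes w \mapsto f(w)$, is an isomorphism for every $M \in \mathcal{R}(G)_\pi$. Injectivity reduces to Schur: writing a kernel element as $\sum_{i=1}^n f_i \otimes w_i$ with the $f_i$ linearly independent, the map $\pi^n \to M$ sending $(v_1, \ldots, v_n) \mapsto \sum_i f_i(v_i)$ has kernel corresponding by Schur to linear dependencies among the $f_i$ and is thus injective, forcing each $w_i = 0$. For surjectivity, set $M_0 := \im(\varepsilon_M) \subseteq M$, which is automatically isomorphic to a direct sum of copies of $\pi$, and $C := M/M_0 \in \mathcal{R}(G)_\pi$ by monotonicity of $\Supp$ under quotients. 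Applying the left exact functor $H$ to $0 \to M_0 \to M \to C \to 0$, and using that every $G$-morphism $\pi \to M$ factors tautologically through $M_0$, the map $H(M_0) \to H(M)$ is an isomorphism, so $H(C) = \Hom_G(\pi, C) = 0$.

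The main obstacle is to conclude $C = 0$---equivalently, that every nonzero $N \in \mathcal{R}(G)_\pi$ contains a subrepresentation isomorphic to $\pi$. I would attack this using tameness together with Theorem \ref{Fell}: choose a compact open $K \subseteq G$ and an irreducible $\rho \in \widehat K$ appearing in $\pi|_K$, so $\pi \in \mathscr{V}(K,\rho)$. For a nonzero $N \in \mathcal{R}(G)_\pi$, any nonzero $v \in N$ is fixed by some compact open $K' \subseteq K$; the nonvanishing $K'$-isotypic decomposition of $N|_{K'}$ combined with Frobenius reciprocity produces a nonzero map $\cInd_{K'}^G \rho' \to N$ for a suitable $\rho' \in \widehat{K'}$, whose cyclic image admits an irreducible quotient by the standard Zorn argument for finitely generated modules. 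The crux is to control this irreducible via the ambient constraint $\Supp N \subseteq \{\pi\}$ together with the neighborhood basis $\mathscr{V}(K,\rho)$---combined with the admissibility of $\pi$ this should force the irreducible to be isomorphic to $\pi$, allowing extraction of a $\pi$-subrepresentation inside $N$.
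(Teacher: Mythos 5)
Your overall strategy---realize $V \mapsto V_G \otimes \pi$ as left adjoint to $H = \Hom_G(\pi,-)$ and verify unit and counit---is sound in principle, and your arguments for the unit being an isomorphism and for injectivity of the counit are both correct (the unit argument is a bit more detailed than the paper's one-line ``obviously fully faithful,'' but it's fine). The problem is in the surjectivity of the counit, where there are two genuine gaps, one of which you acknowledge and one of which you do not.

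The unacknowledged gap is the deduction $H(C) = 0$. Left exactness of $H$ applied to $0 \to M_0 \to M \to C \to 0$ gives exactness of $0 \to H(M_0) \to H(M) \to H(C)$, and the fact that $H(M_0) \to H(M)$ is an isomorphism only tells you that $H(M) \to H(C)$ is the \emph{zero} map; it does not tell you $H(C) = 0$. To conclude $H(C) = 0$ this way you would need $H$ to be right exact as well, i.e.\ $\pi$ to be projective---but by Corollary \ref{projective}, $\pi$ is projective in $\mathcal{R}(G)$ only when it is an open point of $\widehat{G}$, so this fails in general (already for the Heisenberg group). This is exactly the asymmetry the paper is organized around: irreducibles of tame $\ell_c$-groups are always \emph{injective} (Proposition \ref{inj}), not always projective, so arguments should be phrased in terms of $\Hom_G(-,\pi)$ rather than $\Hom_G(\pi,-)$.

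The acknowledged gap---``every nonzero $N \in \mathcal{R}(G)_\pi$ contains a subrepresentation isomorphic to $\pi$''---is also not filled by your sketch. The machinery you describe (pick $v$, induce from a compact open $K'$, take the cyclic image, apply Zorn) produces an irreducible \emph{quotient} of a finitely generated subrepresentation of $N$, which by injectivity of $\pi$ extends to a \emph{quotient} $N \twoheadrightarrow \pi$, not a subrepresentation $\pi \hookrightarrow N$. Without projectivity of $\pi$ there is no splitting, so this does not produce a sub.

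The paper sidesteps both issues by never trying to find sub-$\pi$'s one at a time. Instead it embeds $M$ all at once into a semisimple object: the natural map $M \to \prod_{f \in \Hom_G(M,\pi)} \pi$ is a monomorphism by the Zorn-plus-injectivity argument of Lemma \ref{emptysupp}, and lands in the smooth part of the product, which by an explicit computation using tameness (finite-dimensionality of $\pi^K$) is $(\prod \C) \otimes \pi$, a direct sum of copies of $\pi$. As a subrepresentation of a semisimple object, $M$ is itself a direct sum of copies of $\pi$, which is essential surjectivity. You should redo the counit argument along these lines: once you know $M$ embeds in a direct sum of $\pi$'s, semisimplicity forces $M_0 = M$ directly, without ever needing $H(C) = 0$.
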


In view of the analogy with sheaves it is not surprising then that this functor has a left adjoint, which sends a smooth representation to its ``stalk" at $\pi$. This can be seen by factoring the skyscraper functor $\C-\mathbf{vect} \to \mathcal{R}(G)$ through the inclusion $\mathcal{R}(G)_{\pi} \to \mathcal{R}(G)$, which has a left adjoint described as follows: for any $M \in \mathcal{R}(G)$, write $M_{\pi}$ for the largest quotient of $M$ supported at $\pi$. Explicitly, put \[ M_{\pi} = M / \bigcap_{f \in \Hom_G(M,\pi)} \ker f. \] The content of the claim that $M \mapsto M_{\pi}$ is left adjoint to the inclusion $\mathcal{R}(G)_{\pi} \to \mathcal{R}(G)_{\pi}$ is the following.

\begin{lemma}

For any $M \in \mathcal{R}(G)$ and $\pi \in \widehat{G}$ we have $M_{\pi} \in \mathcal{R}(G)_{\pi}$.

\label{point}

\end{lemma}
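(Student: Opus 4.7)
The plan is to show the stronger statement that $M_\pi$ is $\pi$-isotypic semisimple (i.e.\ a direct sum of copies of $\pi$), from which $\Supp M_\pi \subseteq \{\pi\}$ is immediate. By construction $M_\pi$ is the image of the canonical $G$-equivariant map $\Phi: M \to \pi^I$, $m \mapsto (f(m))_{f \in I}$, where $I = \Hom_G(M,\pi)$, so $M_\pi$ embeds as a subrepresentation of $\pi^I$. Since $M_\pi$ is smooth, the image lies inside $(\pi^I)^{sm}$, and it therefore suffices to show that $(\pi^I)^{sm}$ is $\pi$-isotypic semisimple as a $G$-representation.

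The heart of the argument is to show that each cyclic $G$-subrepresentation of $(\pi^I)^{sm}$ is $\pi$-isotypic semisimple. Fix $w = (w_i)_{i \in I} \in (\pi^I)^{sm}$; by smoothness some compact open $K \subseteq G$ fixes $w$, so every $w_i$ lies in $\pi^K$. Tameness of $G$ forces $\pi$ to be admissible, so $\pi^K$ is finite-dimensional, and hence the subspace $V := \Span\{w_i : i \in I\} \subseteq \pi^K$ is finite-dimensional. Pick a basis $v_1,\dots,v_s$ of $V$ from among the coordinates themselves, say $v_k = w_{i_k}$, and express $w_i = \sum_k a_{i,k} v_k$, so that the $s \times s$ submatrix $(a_{i_k,\,k'})$ is the identity. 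Then the $G$-equivariant linear map
\[
\psi: \pi^s \longrightarrow \pi^I, \qquad \psi(x_1,\dots,x_s) = \Bigl(\sum_k a_{i,k} x_k\Bigr)_{i \in I}
\]
is injective (the identity submatrix witnesses this), and $w = \psi(v_1,\dots,v_s)$ lies in its image. Hence the cyclic $G$-subrepresentation generated by $w$ sits inside $\psi(\pi^s) \cong \pi^s$, a finite direct sum of copies of the irreducible $\pi$, which is manifestly $\pi$-isotypic semisimple; so is every subrepresentation, including the one generated by $w$.

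Since $(\pi^I)^{sm}$ is the sum of its cyclic subrepresentations, and a sum of $\pi$-isotypic semisimple subrepresentations is again $\pi$-isotypic semisimple (being a quotient of the obviously semisimple direct sum), it follows that $(\pi^I)^{sm}$ is $\pi$-isotypic semisimple, and therefore so is $M_\pi$. The main obstacle I anticipate is the linear-algebra construction of $\psi$: one must be careful to extract a basis of $V$ from the coordinates $w_i$ themselves so that the coefficient matrix contains an identity block and $\psi$ is injective. Once this local reduction from the infinite product $\pi^I$ to the finite direct sum $\pi^s$ is in hand, the rest of the argument is routine.
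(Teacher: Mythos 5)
Your proof is correct and follows the same overall strategy as the paper: observe that $M_\pi$ embeds into $(\prod_I \pi)^{sm}$ with $I = \Hom_G(M,\pi)$, and then show this smooth part is $\pi$-isotypic semisimple. Where you diverge is in the key sublemma. The paper (in the proof of Theorem \ref{skyscraper}, to which the proof of Lemma \ref{point} simply refers) writes down the explicit isomorphism $(\prod_I\C)\otimes\pi \to (\prod_I\pi)^{sm}$ sending $(\prod_i a_i)\otimes v \mapsto \prod_i a_i v$, and checks surjectivity using exactly the finiteness of $\pi^K$ that you invoke. You instead reduce to cyclic subrepresentations: a smooth $w=(w_i)$ has all coordinates in the finite-dimensional $\pi^K$, so $w$ lies in the image of a $G$-equivariant embedding $\psi:\pi^s \hookrightarrow \pi^I$, and the cyclic subrepresentation generated by $w$ sits inside a finite power of $\pi$. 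Both arguments rest on tameness in the same place. Your route is slightly more pedestrian — you avoid having to verify that the explicit map is surjective onto the whole smooth part, at the cost of the basis-extraction trick (taking the $v_k$ from among the $w_i$ so the coefficient matrix contains an identity block) to get injectivity of $\psi$. The paper's version is sharper in that it identifies $(\prod_I\pi)^{sm}$ exactly, not just as $\pi$-isotypic, but for the purposes of Lemma \ref{point} that extra precision is unused, so your argument is a perfectly good substitute.
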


Now we define $M(\pi) = \Hom_G(\pi,M_{\pi})$, the vector space we want to think of as the stalk of $M$ at $\pi$.

These stalk functors have some useful properties in common with sheaf-theoretic stalk functors. For example, $M = 0$ if and only if $M(\pi) = 0$ for every $\pi \in \widehat{G}$: this follows from the fact that $M = 0$ if and only if $\Supp M = \varnothing$. The nontrivial content of this statement is that every nonzero smooth representation has an irreducible quotient, which is to say a maximal subrepresentation. Much more generally, the following result allows us to adapt the proofs of many statements about sheaves to the setting of smooth representations.

\begin{lemma}

A sequence $M \to N \to P$ in $\mathcal{R}(G)$ is exact if and only if the associated sequence of vector spaces $M(\pi) \to N(\pi) \to P(\pi)$ is exact for every $\pi \in \widehat{G}$.

\label{exact}

\end{lemma}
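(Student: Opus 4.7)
The plan is to reduce the lemma to the exactness of the stalk functor $M \mapsto M(\pi)$ for each $\pi \in \widehat{G}$; once this is established, both directions of the lemma will follow formally. By Theorem \ref{skyscraper}, the inverse of the equivalence $\mathcal{R}(G)_\pi \simeq \C-\mathbf{vect}$ sends $N \mapsto \Hom_G(\pi, N)$, so the stalk $M(\pi) = \Hom_G(\pi, M_\pi)$ literally is the image of $M_\pi$ under this equivalence. Exactness of $(-)(\pi)$ is therefore equivalent to exactness of $(-)_\pi : \mathcal{R}(G) \to \mathcal{R}(G)_\pi$, and the latter is automatically right exact, since as explained before Lemma \ref{point} it is left adjoint to the inclusion.

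The substantive work is showing $(-)_\pi$ is also left exact. Given an injection $M \hookrightarrow N$, the induced map $M_\pi \to N_\pi$ has kernel $(M \cap K_N)/K_M$, where $K_M, K_N$ are the intersections of kernels of all $G$-morphisms from $M$, $N$ respectively to $\pi$. The inclusion $K_M \subseteq M \cap K_N$ is immediate from restriction; the reverse is equivalent---via the natural isomorphism $\Hom_G(L, \pi) \cong L(\pi)^*$, which follows from Theorem \ref{skyscraper} and Proposition \ref{schur}---to surjectivity of the restriction map $\Hom_G(N, \pi) \to \Hom_G(M, \pi)$. In other words, we need every $G$-morphism $M \to \pi$ to extend to a $G$-morphism $N \to \pi$. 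For abelian $G$ this extension property follows from Theorem \ref{rodier} together with the exactness of stalks in sheaf categories; for the nonabelian tame $\ell_c$-setting one must argue directly, leveraging admissibility of $\pi$, perhaps via contragredient duality. This is the technical heart of the argument and the step I expect to be the main obstacle.

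Granting exactness of $(-)(\pi)$ for each $\pi$, the forward direction of the lemma is immediate. For the converse, assume $M(\pi) \to N(\pi) \to P(\pi)$ is exact for every $\pi$. We first verify that $\beta \circ \alpha = 0$ in $\mathcal{R}(G)$: if $\im(\beta\alpha)$ were nonzero it would, by the observation after Lemma \ref{point}, admit an irreducible quotient $\sigma$; applying the extension property now to $\im(\beta\alpha) \hookrightarrow P$ produces a map $P \to \sigma$ whose composition with $\beta\alpha$ exhibits $(\beta\alpha)(\sigma) \neq 0$, contradicting the assumed complex condition on stalks. Once $\beta\alpha = 0$, we form the homology $H := \ker(\beta)/\im(\alpha)$ at $N$; exactness of $(-)(\pi)$ together with the hypothesis yields $H(\pi) = 0$ for every $\pi$, so $\Supp H = \varnothing$ and $H = 0$, completing the proof.
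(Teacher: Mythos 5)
Your reduction is sound and, after unwinding, amounts to the same argument as the paper's: the isomorphism $\Hom_G(M,\pi)\cong M(\pi)^*$ you invoke is precisely the chain of identifications the paper uses, and exactness of $(-)(\pi)$ is indeed equivalent, by dualizing, to exactness of the functor $\Hom_G(-,\pi)$ on $\mathcal{R}(G)$, i.e.\ to $\pi$ being an injective object. Your handling of the converse direction (testing $\beta\alpha=0$ by producing an irreducible quotient of its image and then checking that the homology has empty support) is also essentially what the paper does, phrased through Corollary \ref{zeromap} and Lemma \ref{emptysupp}.

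The genuine gap is exactly the step you flag: you never establish that every $G$-morphism $M\to\pi$ extends along an inclusion $M\hookrightarrow N$, i.e.\ that $\pi$ is injective in $\mathcal{R}(G)$. This is not something that can be waved away by appealing to the abelian case and Rodier's theorem, nor does it follow formally from tameness alone without an intermediate result. The paper supplies it as Proposition \ref{inj}, whose proof is a short but essential chain: for an $\ell_c$-group the Jacquet (coinvariants) functor $J_G$ is exact (Bernstein--Zelevinsky), and therefore for any $M\in\mathcal{R}(G)$ the smooth dual $M^\vee$ is an injective object, since
\[
\Hom_G(N,M^\vee)\cong\Hom_{\C}\bigl(J_G(N\otimes_{\C}M),\C\bigr)
\]
is a composition of exact functors (Lemma \ref{injective}). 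Then admissibility of $\pi$ gives $\pi\cong(\pi^\vee)^\vee$ (Proposition \ref{admissible}), so $\pi$ is injective. Your instinct that ``contragredient duality'' and admissibility are the right tools was correct, but without naming the exactness of $J_G$ and the injectivity of smooth duals, the argument does not close. As written, the proposal assumes the one nontrivial input the lemma actually requires; everything else is formal.
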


So we can test exactness on stalks. In particular, a morphism of smooth representations is a monomorphism (respectively epimorphism, isomorphism) if and only if the associated map of stalks is a monomorphism (respectively epimorphism, isomorphism) at every point in the dual space.

\subsection{The second characterization of the Fell topology}

Now we state our next characterization of the Fell topology, which has a categorical flavor, but first let us fix some notation. Consider subsets $Z \subset W \subset \widehat{G}$ and write $i_! : \mathcal{R}(G)_Z \to \mathcal{R}(G)_W$ for the inclusion functor.

This notation suggests an analogy with the pushforward functor for sheaves, and indeed our situation is very similar on a formal level. We might ask for a left adjoint to $i_!$, and our wish is granted as long as $Z$ is closed. But in fact something even better is true.

\begin{theorem}

If $W \subset \widehat{G}$ is locally closed and $Z \subset W$, then the inclusion functor $i_! : \mathcal{R}(G)_Z \to \mathcal{R}(G)_W$ admits a left adjoint $i^* : \mathcal{R}(G)_W \to \mathcal{R}(G)_Z$ if and only if $Z$ is closed in $W$. When $i^*$ exists it is exact.

\label{fell2}

\end{theorem}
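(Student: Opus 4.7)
The plan is to leverage the Fell-topology basis $\{\mathscr{V}(K,\rho)\}$ from Theorem \ref{Fell} together with the $K$-isotypic decomposition $M|_K = \bigoplus_\rho M^{K,\rho}$ afforded by the semisimplicity of $\mathcal{R}(K)$ for compact open $K \subset G$. This furnishes the handle I need on both supports of subrepresentations and vanishing of stalks.

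For the $\Leftarrow$ direction, I assume $Z$ is closed in $W$. For each $M \in \mathcal{R}(G)_W$, I define $i^*M := M/M'$, where $M' \subset M$ is the sum, over all pairs $(K,\rho)$ with $\mathscr{V}(K,\rho) \cap Z = \varnothing$, of the $G$-subrepresentations $\langle M^{K,\rho}\rangle_G$ generated by the $\rho$-isotypic components. Each summand has support in $\mathscr{V}(K,\rho)$, since any nonzero $G$-map $\langle M^{K,\rho}\rangle_G \to \tau$ restricts to a nonzero $K$-map out of $M^{K,\rho}$, forcing $\rho$ to occur in $\tau|_K$. Hence $\Supp M' \subset W \setminus Z$, and by Lemma \ref{exact} the stalks $M'(\pi)$ vanish on $Z$, giving $(i^*M)(\pi) = M(\pi)$ for $\pi \in Z$ and $0$ elsewhere. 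The universal property reduces to showing that $N^{K,\rho} = 0$ for every $N \in \mathcal{R}(G)_Z$ whenever $\mathscr{V}(K,\rho) \cap Z = \varnothing$: a nonzero $n \in N^{K,\rho}$ generates a cyclic $G$-subrepresentation $\langle n\rangle_G$ which, by Zorn's lemma applied to its proper subrepresentations, has an irreducible quotient $\tau$ in which $n$ maps to a nonzero vector in $\tau^{K,\rho}$, forcing $\tau \in \mathscr{V}(K,\rho) \cap \Supp N \subset \mathscr{V}(K,\rho) \cap Z = \varnothing$, a contradiction. The closedness hypothesis enters precisely when showing $\Supp(M/M') \subset Z$: for $\pi \in W \setminus Z$ I pick $(K,\rho)$ with $\pi \in \mathscr{V}(K,\rho)$ and $\mathscr{V}(K,\rho) \cap Z = \varnothing$ (using Theorem \ref{Fell}); any nonzero $\phi: M/M' \to \pi$ lifts to $\tilde\phi: M \twoheadrightarrow \pi$ with $\tilde\phi(M^{K,\rho}) = 0$, but choosing $m \in M$ with $\tilde\phi(m) \in \pi^{K,\rho} \setminus \{0\}$ and projecting to the $\rho$-component $m_\rho$ of the finite-dimensional semisimple $K$-module $\langle m\rangle_K$ yields $m_\rho \in M^{K,\rho}$ with $\tilde\phi(m_\rho) \neq 0$, a contradiction. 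Exactness of $i^*$ then follows immediately from Lemma \ref{exact} given the stalk description.

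For the $\Rightarrow$ direction, suppose $i^*$ exists and take $\pi \in W \setminus Z$. By Schur's lemma (Proposition \ref{schur}) the unit $\eta_\pi: \pi \to i^*\pi$ is either zero or injective; injectivity would place $\pi$ in $\Supp i^*\pi \subset Z$, contradicting $\pi \notin Z$. So $\eta_\pi = 0$, and the adjunction yields $\Hom_G(\pi, N) = 0$ for every $N \in \mathcal{R}(G)_Z$. The main obstacle is converting this vanishing into closedness of $Z$: I must rule out $\pi \in \overline{Z}^W \setminus Z$ by producing a witness $N \in \mathcal{R}(G)_Z$ with $\Hom_G(\pi, N) \neq 0$ whenever $\pi \in \overline{Z}^W$. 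The input from Theorem \ref{Fell} is that for every $\rho \subset \pi|_K$ there exists $\sigma_\rho \in Z$ with $\rho \subset \sigma_\rho|_K$. I plan to construct $N$ as an injective hull, taken inside the Grothendieck abelian subcategory $\mathcal{R}(G)_Z$, of a suitable direct sum of such $\sigma_\rho$'s chosen to match the $K$-isotypic components of $\pi|_K$; the admissibility of $\pi$ from tameness is essential here to keep the $K$-isotypic components finite-dimensional and the embedding $\pi \hookrightarrow N$ constructible in a coherent way.
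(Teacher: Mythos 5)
Your $\Leftarrow$ direction is sound and runs parallel to the paper's, just with a different (equivalent) construction: you take $M' = \sum \langle M^{K,\rho}\rangle_G$ over those $(K,\rho)$ with $\mathscr{V}(K,\rho) \cap Z = \varnothing$, whereas the paper sets $j^!M = \bigcap_{\pi \in Z,\,f\colon M\to\pi}\ker f$; both use semisimplicity of $\mathcal{R}(K)$ and the Fell basis from Theorem~\ref{Fell} to show the quotient is supported in $Z$ precisely when $Z$ is closed. Your exactness argument via Lemma~\ref{exact} and the stalk description $(i^*M)(\pi)=M(\pi)$ for $\pi\in Z$, $=0$ otherwise, also works (the paper gives a direct argument using injectivity of irreducibles, Proposition~\ref{inj}).

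The $\Rightarrow$ direction, however, contains a fatal error. Your plan is to exploit the vanishing $\Hom_G(\pi,N)=0$ for $\pi\in W\setminus Z$ and $N\in\mathcal{R}(G)_Z$, which you derive from the adjunction, and then to contradict it by producing $N\in\mathcal{R}(G)_Z$ with $\Hom_G(\pi,N)\neq 0$ whenever $\pi$ lies in the closure of $Z$ in $W$. But the vanishing you observe is \emph{unconditional}: it holds for every $\pi\notin Z$ and every $N\in\mathcal{R}(G)_Z$ regardless of whether $i^*$ exists. Indeed, a nonzero map $\pi\to N$ with $\pi$ irreducible is an embedding; since $\pi$ is injective in $\mathcal{R}(G)$ (Proposition~\ref{inj}), $\pi$ is then a direct summand of $N$, hence a quotient of $N$, so $\pi\in\Supp N\subset Z$. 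Thus the witness $N$ you propose to construct cannot exist, and the injective-hull construction will not produce one. More to the point, $\eta_\pi=0$ carries no information: it is automatic.

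The actual leverage provided by the existence of $i^*$ is the constraint $\Supp i^*M\subset Z$ for \emph{every} $M\in\mathcal{R}(G)_W$, not just $M=\pi$, and to extract something from it you must apply $i^*$ to a large, generating object. The paper takes $M$ to be the restriction $k^!\mathcal{H}(G)$ of the regular representation to $W$, identifies $\mathscr{M}(K,\rho)$ with the Hecke algebra $A=\mathcal{H}(G,\rho)$ as an $A$-module via Theorem~\ref{type}, and invokes Lemma~\ref{shrink} (a genuinely nontrivial Hecke-algebra argument resting on the $\ell_c$-hypothesis and the simplicity/finite-dimensionality of $\Hom_K(\rho,\pi|_K)$) to produce a smaller basic neighborhood $\mathscr{V}(L,\sigma)\ni\pi$ with $\mathscr{V}(L,\sigma)\cap Z=\varnothing$. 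That step is the real content of the $\Rightarrow$ direction and is entirely missing from your proposal; it cannot be replaced by an injective-hull argument in $\mathcal{R}(G)_Z$.
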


Notice that $i^*$ exists when $Z$ is a point and $W = \widehat{G}$ by Lemma \ref{point}, which yields the following corollary. 

\begin{corollary}

For any tame $\ell_c$-group $G$, the Fell topology on $\widehat{G}$ is $T_1$.

\label{T1}

\end{corollary}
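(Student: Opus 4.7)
The proof is essentially immediate from Theorem \ref{fell2} and Lemma \ref{point}, as foreshadowed by the sentence preceding the corollary. Recall that a topological space is $T_1$ precisely when every singleton is a closed subset, so it suffices to show that for every $\pi \in \widehat{G}$, the set $\{\pi\}$ is closed in $\widehat{G}$.

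Fix $\pi \in \widehat{G}$, and set $W = \widehat{G}$ and $Z = \{\pi\}$. Clearly $W$ is locally closed in $\widehat{G}$ (it is the whole space, hence open and closed). The plan is to verify that the hypothesis of Theorem \ref{fell2} regarding the existence of a left adjoint to $i_! : \mathcal{R}(G)_Z \to \mathcal{R}(G)_W$ is satisfied, and then invoke the theorem to conclude that $Z$ is closed in $W = \widehat{G}$.

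To produce this adjoint, I would use exactly the construction $M \mapsto M_\pi$ discussed in Section \ref{stalkstatements}: for any $M \in \mathcal{R}(G)$, set $M_\pi = M / \bigcap_{f \in \Hom_G(M,\pi)} \ker f$. Lemma \ref{point} says $M_\pi$ lies in $\mathcal{R}(G)_\pi$, and the universal property of the quotient yields a natural bijection $\Hom_G(M_\pi, N) \cong \Hom_G(M, i_! N)$ for any $N \in \mathcal{R}(G)_\pi$, so $M \mapsto M_\pi$ is left adjoint to $i_!$. Applying Theorem \ref{fell2} in the direction ``existence of $i^*$ implies $Z$ closed in $W$", we conclude $\{\pi\}$ is closed in $\widehat{G}$. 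Since $\pi \in \widehat{G}$ was arbitrary, the Fell topology on $\widehat{G}$ is $T_1$.

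There is no real obstacle here: the entire content of the corollary has already been packaged into Theorem \ref{fell2} and Lemma \ref{point}, and the proof consists essentially of observing that the hypotheses of the former are met by the construction guaranteed by the latter.
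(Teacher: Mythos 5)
Your proposal is correct and matches the paper's intended argument, which is condensed into the single sentence preceding the corollary: $i^*$ exists for $Z = \{\pi\}$, $W = \widehat{G}$ by Lemma \ref{point}, and the "only if" direction of Theorem \ref{fell2} then forces $\{\pi\}$ to be closed.
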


Theorem \ref{fell2} has another interesting consequence.

\begin{corollary}

An irreducible representation $\pi \in \widehat{G}$ is projective in $\mathcal{R}(G)$ if and only if $\pi$ is an open point of $\widehat{G}$ in the Fell topology.

\label{projective}

\end{corollary}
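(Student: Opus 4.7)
Assuming $\{\pi\}$ is open, I would use Theorem~\ref{Fell} to produce a compact open $K \subset G$ and an irreducible $\rho \in \widehat{K}$ with $\mathscr{V}(K,\rho) = \{\pi\}$. The compactly induced representation $P := \cInd_K^G \rho$ is projective in $\mathcal{R}(G)$: it is the image under the left adjoint $\cInd_K^G$ of the exact restriction functor $\Res_K^G$, applied to the projective object $\rho$ of the semisimple category $\mathcal{R}(K)$. Frobenius reciprocity gives $\Supp(P) = \mathscr{V}(K,\rho) = \{\pi\}$, so $P \in \mathcal{R}(G)_\pi$, and Theorem~\ref{skyscraper} then identifies $P$ with a direct sum of copies of $\pi$. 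Thus $\pi$ is a direct summand of the projective $P$, hence projective.

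\textbf{Reverse direction, setup.} For the converse, assume $\pi$ is projective and set $U := \widehat{G} \setminus \{\pi\}$, which is open by Corollary~\ref{T1}. By Theorem~\ref{fell2}, $\{\pi\}$ is open if and only if the inclusion $i_! : \mathcal{R}(G)_U \hookrightarrow \mathcal{R}(G)$ admits a left adjoint, so I aim to construct one. My candidate is $M \mapsto M/M^\pi$, where $M^\pi$ denotes the largest subrepresentation of $M$ lying in $\mathcal{R}(G)_\pi$ (in contrast with the largest quotient $M_\pi$ defined in the text). By Theorem~\ref{skyscraper}, $M^\pi \cong \Hom_G(\pi,M) \otimes \pi$, a direct sum of copies of $\pi$. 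I would first verify that $M^\pi$ is injective in $\mathcal{R}(G)$: combining Theorem~\ref{fell2} and Theorem~\ref{skyscraper} yields $\Hom_G(-,\pi) \cong (-)(\pi)^\ast$, exact by Lemma~\ref{exact}, and hence $\Hom_G(-,M^\pi)$, being a product of copies of $\Hom_G(-,\pi)$, is also exact. Injectivity of $M^\pi$ then splits the inclusion $M^\pi \hookrightarrow M$ into $M \cong M^\pi \oplus (M/M^\pi)$; note that only injectivity of $\pi$ (automatic) has been used to this point.

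\textbf{Main step.} To finish, I need $M/M^\pi \in \mathcal{R}(G)_U$, equivalently $\Hom_G(M/M^\pi,\pi) = 0$; this is the only place where projectivity of $\pi$ genuinely enters. Applying the exact functor $\Hom_G(-,\pi)$ to $0 \to M^\pi \to M \to M/M^\pi \to 0$ and dualizing reduces the question to showing that the natural stalk comparison $M^\pi(\pi) \to M(\pi)$ is an isomorphism. Injectivity of this map is straightforward since every morphism $\pi \to M$ factors through $M^\pi$; surjectivity is the non-trivial part and follows by applying the now-exact functor $\Hom_G(\pi,-)$ to $0 \to \ker(M \twoheadrightarrow M_\pi) \to M \to M_\pi \to 0$ and observing that this kernel has no $\pi$-subrepresentation, because $M^\pi \cap \ker = 0$ (the intersection of kernels of all linear functionals on a vector space is zero). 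Once $M/M^\pi \in \mathcal{R}(G)_U$ is established, verifying that $M \mapsto M/M^\pi$ is left adjoint to $i_!$ is routine, since $\Hom_G(M^\pi,N) = 0$ for every $N \in \mathcal{R}(G)_U$: any sub-$\pi$ of such $N$ would split off by injectivity of $\pi$ and produce a $\pi$-quotient, contradicting $N \in \mathcal{R}(G)_U$. The chief obstacle is the comparison $M^\pi(\pi) \to M(\pi)$, which forces the a priori distinct ``sub-$\pi$-isotypic'' and ``quotient-$\pi$-isotypic'' components of $M$ to coincide --- precisely the categorical content of projectivity.
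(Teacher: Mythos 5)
Your proof is correct, and the forward direction takes a genuinely different route from the paper. For ``open implies projective,'' the paper works internally with the stalk machinery: since $\{\pi\}$ is open and closed, the natural comparison $i^!M \to i^*M$ is an isomorphism, so $\Hom_G(\pi,-)$ is identified with the exact stalk functor $M \mapsto M(\pi)$. You instead produce an explicit projective object: using Theorem~\ref{Fell} to find $\mathscr{V}(K,\rho) = \{\pi\}$, then observing $\cInd_K^G\rho$ is projective (left adjoint of an exact functor applied to a projective) and supported at $\pi$, so that Theorem~\ref{skyscraper} exhibits $\pi$ as a summand. This is more constructive and avoids needing the clopen comparison isomorphism. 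For the converse, your $M^\pi$ is exactly the paper's $i^!M$ and $M/M^\pi$ is the paper's $j^*M$; both proofs ultimately run projectivity of $\pi$ through $\Hom_G(\pi,-)$ applied to a short exact sequence, with the paper's version noticeably more compact. Two small cautions. First, your justification that $M^\pi = \bigoplus_I \pi$ is injective --- that $\Hom_G(-,M^\pi)$ is ``a product of copies of $\Hom_G(-,\pi)$'' --- is not right, since $\Hom_G(N,\bigoplus_I \pi)$ is a proper subspace of $\prod_I\Hom_G(N,\pi)$ in general; the correct reason is that the skyscraper functor $\C\text{-}\mathbf{vect} \to \mathcal{R}(G)$ is right adjoint to the exact stalk functor and hence preserves injectives, and every vector space is injective. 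Fortunately you never actually use the resulting splitting $M \cong M^\pi \oplus M/M^\pi$, so nothing downstream is affected. Second, the ``injectivity'' half of the comparison $M^\pi(\pi)\to M(\pi)$ does not follow merely from maps $\pi\to M$ factoring through $M^\pi$ (that gives $\Hom_G(\pi,M^\pi)\cong\Hom_G(\pi,M)$, but the comparison continues on to $\Hom_G(\pi,M_\pi)$); it also requires your kernel observation. Since, as you compute, only surjectivity of the comparison is needed to conclude $\Hom_G(M/M^\pi,\pi)=0$, this is cosmetic rather than a gap.
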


The proof of Corollary \ref{projective} may not be entirely obvious and is included in Section \ref{adjointproofs}.

\subsection{An interlude on Hecke algebras}

Before we state our last characterization of the Fell topology on $\widehat{G}$ for $G$ a tame $\ell_c$-group, which says that locally the dual space is homeomorphic to the primitive spectrum of a certain unital Hecke algebra equipped with the Jacobson (sometimes called hull-kernel) topology, we need a couple of definitions.

\begin{definition}

Write $\mathcal{H}(G)$ for the \emph{Hecke algebra of} $G$, that is, the convolution algebra of locally constant $\C$-valued functions on $G$ with compact support, which is unital only when $G$ is discrete.

\end{definition}

Recall that the category $\mathcal{R}(G)$ is isomorphic to the full subcategory of $\mathcal{H}(G)$-modules $M$ which are non-degenerate in the sense that $\mathcal{H}(G) \cdot M = M$.

\begin{definition}

Fix a compact open subgroup $K \subset G$ and $\rho \in \widehat{K}$. We write $\mathcal{H}(G,\rho)$ for the convolution algebra of locally constant functions $f : G \to \End_{\C}(W^{\vee})$ with compact support satisfying \[ f(k_1gk_2) = \rho^{\vee}(k_1) \circ f(g) \circ \rho^{\vee}(k_2) \ \ \ \ k_1,k_2 \in K, \ g \in G. \] 

\end{definition}

It is well-known that $\mathcal{H}(G,\rho)$ is canonically isomorphic to $\End_G(\cInd_K^G \rho)$ as a $\C$-algebra, so that for any smooth representation $M \in \mathcal{R}(G)$ the space $\Hom_K(\rho,M|_K)$ of $\rho$-invariants has a natural $\mathcal{H}(G,\rho)$-module structure. We point the reader to \cite{BK} for a detailed discussion, with proofs, of these Hecke algebras. The relevant definitions, notations, and facts about induced representations and Frobenius reciprocity may be found in Section \ref{heckeproofs}.

Write $\mathcal{R}_{\rho}(G)$ for the full subcategory of $\mathcal{R}(G)$ consisting of smooth representations $M$ with the property that $M$ is generated by its $\rho$-isotypic component (the sum of all $K$-subrepresentations of $M$ isomorphic to $\rho$). The next theorem is not so hard, but gives some formal motivation for the more interesting Theorem \ref{fell3} and will be quite useful to us.

\begin{theorem}

If $G$ is a tame $\ell_c$-group, $K \subset G$ is any compact open subgroup, and $\rho \in \widehat{K}$, the functor of $\rho$-invariants that sends $M \mapsto \Hom_K(\rho,M|_K)$ restricts to an equivalence of categories \[ \mathcal{R}_{\rho}(G) \to \mathcal{H}(G,\rho)-\mathbf{mod}. \]

\label{type}

\end{theorem}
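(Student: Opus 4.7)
Set $P = \cInd_K^G \rho$. Frobenius reciprocity identifies $F(M) = \Hom_K(\rho, M|_K)$ with $\Hom_G(P, M)$, functorially carrying the natural $\mathcal{H}(G,\rho) = \End_G(P)^{\mathrm{op}}$-module structure. Since $\cInd_K^G$ is left adjoint to the exact restriction $\Res_K^G$ and $\rho$ is projective in the semisimple category $\mathcal{R}(K)$, the representation $P$ is projective in $\mathcal{R}(G)$, so $F$ is exact; moreover, $F$ commutes with arbitrary direct sums, since $\rho$ is finite-dimensional (irreducible smooth representations of a compact profinite group are finite-dimensional). Finally $P \in \mathcal{R}_\rho(G)$, because the canonical copy of $\rho$ inside $\cInd_K^G \rho$ generates it as a $G$-representation. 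The candidate quasi-inverse is the tensor-Hom adjoint $L(V) = P \otimes_{\mathcal{H}(G,\rho)} V$, left adjoint to $F$; since $L(V)$ is a quotient of a direct sum of copies of $P$, the essential image of $L$ lies in $\mathcal{R}_\rho(G)$.

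That the unit $\eta_V : V \to FL(V)$ is an isomorphism is a standard presentation argument: given a free presentation $\mathcal{H}(G,\rho)^{(J)} \to \mathcal{H}(G,\rho)^{(I)} \to V \to 0$, applying $L$ (right exact) and then $F$ (exact and sum-preserving) yields the same presentation of $FL(V)$ via the identification $F(P) = \mathcal{H}(G,\rho)$. The harder task is to show that the counit $\epsilon_M : LF(M) \to M$ is an isomorphism for $M \in \mathcal{R}_\rho(G)$; the crux is the following closure lemma: $\mathcal{R}_\rho(G)$ is stable under subrepresentations. To prove this, first observe that if $M \in \mathcal{R}_\rho(G)$ and $\pi$ is any irreducible quotient, then by $K$-semisimplicity the $\rho$-isotypic component of $\pi$ is the image of the $\rho$-isotypic of $M$, hence nonzero (as the latter generates $M$), so $\Supp M \subseteq \mathscr{V}(K, \rho)$. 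Now given $N \subseteq M$, let $N' \subseteq N$ be the subrepresentation generated by the $\rho$-isotypic of $N$; $K$-semisimplicity forces the $\rho$-isotypic of $N/N'$ to vanish. If $N/N'$ were nonzero it would admit an irreducible quotient $\pi$, which is also an irreducible quotient of $N$ and hence lies in $\Supp N \subseteq \Supp M \subseteq \mathscr{V}(K, \rho)$ (using monotonicity of $\Supp$ via Lemma \ref{exact}), so $\pi$ would have nonzero $\rho$-isotypic component, contradicting the vanishing of the $\rho$-isotypic of $N/N'$. Thus $N = N' \in \mathcal{R}_\rho(G)$.

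With this closure in hand, the counit is iso by a short chase: surjectivity follows from the image of $\epsilon_M$ being the subrepresentation generated by the $\rho$-isotypic, which equals $M$ by hypothesis; for injectivity, the triangle identity gives $F(\epsilon_M) \circ \eta_{F(M)} = \id_{F(M)}$, so $F(\epsilon_M)$ is an isomorphism (as $\eta$ is), and exactness of $F$ gives $F(\ker \epsilon_M) = 0$. But $\ker \epsilon_M \subseteq LF(M) \in \mathcal{R}_\rho(G)$ lies in $\mathcal{R}_\rho(G)$ by the closure lemma, and any object of $\mathcal{R}_\rho(G)$ with trivial $\rho$-isotypic is generated by zero, hence zero. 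Therefore $\epsilon_M$ is an isomorphism, and $F$ and $L$ are mutually quasi-inverse equivalences $\mathcal{R}_\rho(G) \simeq \mathcal{H}(G,\rho)-\mathbf{mod}$.
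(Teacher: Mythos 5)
Your proof is correct, and it takes a genuinely different route from the paper. The paper's proof is short because it invokes Proposition 3.3 of Bushnell–Kutzko as a black box: that criterion reduces the theorem to showing $\mathcal{R}_\rho(G)$ is closed under subquotients, which the paper establishes by proving the sharper identification $\mathcal{R}_\rho(G) = \mathcal{R}(G)_{\mathscr{V}(K,\rho)}$ (a fact that is reused later, notably in the proof of Theorem \ref{fell3}). You instead unfold the BK criterion and build the equivalence explicitly: you observe that $P = \cInd_K^G \rho$ is a projective, finitely generated object with $\End_G(P)^{\mathrm{op}} \cong \mathcal{H}(G,\rho)$, set up the tensor–Hom adjunction $L \dashv F$, verify the unit is an isomorphism by a free-presentation argument, and reduce the counit to a closure lemma. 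Your closure lemma (stability of $\mathcal{R}_\rho(G)$ under subrepresentations) is the genuinely nontrivial input, and its proof is essentially the same argument the paper uses for the inclusion $\mathcal{R}(G)_{\mathscr{V}(K,\rho)} \subseteq \mathcal{R}_\rho(G)$: pass to the subrepresentation generated by the $\rho$-isotypic, show the quotient has empty support using Lemma \ref{emptysupp} together with the containment $\Supp M \subseteq \mathscr{V}(K,\rho)$ (and monotonicity of $\Supp$ via Lemma \ref{exact}, which ultimately rests on injectivity of irreducibles, Proposition \ref{inj}). Both proofs therefore lean on the same key facts about tame $\ell_c$-groups; yours trades the BK citation for a self-contained categorical argument, while the paper's proof is shorter and delivers the support-theoretic reformulation $\mathcal{R}_\rho(G) = \mathcal{R}(G)_{\mathscr{V}(K,\rho)}$ as a byproduct. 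One small remark: you state the isomorphism as $\mathcal{H}(G,\rho) = \End_G(P)^{\mathrm{op}}$ whereas the paper states it without the opposite; this is a matter of left/right module conventions for $\Hom_G(P,-)$ and does not affect the argument.
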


It is worth noting that if we do not assume that $G$ is an $\ell_c$-group then this theorem is known to be false in general. In the representation theory of $p$-adic reductive groups a pair $(K,\rho)$ for which the theorem holds is called a \emph{type}. 

\subsection{The Fell topology revisited}

Let $A$ be any unital associative ring, and recall that a \emph{left primitive ideal} of $A$ is the annihilator of a simple left $A$-module (one may have this discussion for non-unital rings, but we are working locally with unital rings anyway). We will write $\widehat{A}$ for the space of left primitive ideals of $A$. The \emph{Jacobson} or \emph{hull-kernel topology} on $\widehat{A}$ is defined as follows: for an arbitrary subset $S \subset \widehat{A}$ we define the \emph{kernel of} $S$ \[ J_S = \bigcap_{I \in S} I \] to be the intersection of the ideals in $S$. The closure of $S$ is the set of primitive ideals which contains $J_S$, and this closure operator defines the Jacobson topology on $\widehat{A}$.

Now take $A = \mathcal{H}(G,\rho)$. Clearly Theorem \ref{type} implies that the functor of $\rho$-invariants induces a bijection $\mathscr{V}(K,\rho) \to \widehat{A}$, and in fact one can show this directly even when the theorem fails, e.g. when $G$ is reductive and $(K,\rho)$ is not a type. Finally, we state our third characterization of the Fell topology.

\begin{theorem}

Let $G$ be a tame $\ell_c$-group. Then the bijection $\mathscr{V}(K,\rho) \to \widehat{A}$ which takes $\rho$-invariants is a homeomorphism, where $\widehat{A}$ is given the Jacobson topology.

\label{fell3}

\end{theorem}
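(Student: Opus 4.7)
My plan is to transport the problem to the $A$-module side via Theorem \ref{type}, then compare the Fell and Jacobson topologies by showing that natural sub/bases for them generate the same open sets. Under the equivalence $\mathcal{R}_\rho(G) \xrightarrow{\sim} A\text{-mod}$ given by $\rho$-invariants, each irreducible $\pi \in \mathscr{V}(K,\rho)$ (which lies in $\mathcal{R}_\rho(G)$ because it is irreducible with nonzero $\rho$-isotypic component) corresponds to a simple $A$-module $\pi^\rho$, and the bijection $\Phi$ sends $\pi$ to $\Ann_A(\pi^\rho)$. The Jacobson topology on $\widehat{A}$ has a subbasis of open sets $U_a := \{\pi : a \cdot \pi^\rho \neq 0\}$ for $a \in A$, while by Theorem \ref{Fell} the induced Fell topology on $\mathscr{V}(K,\rho)$ has a basis of sets of the form $\mathscr{V}(K,\rho) \cap \mathscr{V}(K',\rho')$. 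It is enough to see that these two collections generate the same topology on $\mathscr{V}(K,\rho)$.

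For one direction, I will show that each $U_a$ is Fell-open. Given $a \in A$ and $\pi \in U_a$, use the canonical isomorphism $A \cong \End_G(\cInd_K^G \rho)$ to view $a$ as a $G$-endomorphism of $\mathscr{P} := \cInd_K^G \rho$. The action of $a$ on $\pi^\rho = \Hom_G(\mathscr{P},\pi)$ is by precomposition, so $a \cdot \pi^\rho \neq 0$ means that some $G$-homomorphism $\mathscr{P} \to \pi$ is nonzero on the image of $a$. Because $a$ has compact support modulo $K$, one can choose a compact open subgroup $K' \subseteq K$ small enough that $a$ is bi-$K'$-equivariant; then an irreducible $K'$-subrep $\rho'$ of $\pi|_{K'}$ detected by the nonvanishing of $\varphi\circ a$ yields a basic open $\mathscr{V}(K,\rho) \cap \mathscr{V}(K',\rho')$ containing $\pi$ and contained in $U_a$.

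For the reverse direction, I will show that each $\mathscr{V}(K,\rho) \cap \mathscr{V}(K',\rho')$ is Jacobson-open. Fix $\pi$ in this intersection. After replacing $K'$ by $K' \cap K$ and $\rho'$ by an irreducible $K' \cap K$-constituent of $\rho'|_{K' \cap K}$ still appearing in $\pi$, I may assume $K' \subseteq K$. The idempotent $e_{\rho'} \in \mathcal{H}(K')$ projecting onto $\rho'$-isotypic can then be combined with the matrix-coefficient data of $\rho$ (via the standard construction underlying $A = \mathcal{H}(G,\rho)$) to produce an element $a \in A$ whose action on $\sigma^\rho$ is nonzero precisely when $\rho'$ appears as a $K'$-subrep of $\sigma|_{K'}$. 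This gives the equality $U_a \cap \mathscr{V}(K,\rho) = \mathscr{V}(K,\rho) \cap \mathscr{V}(K',\rho')$, so the latter is Jacobson-open.

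The main technical obstacle lies in the translation between elements of $A$ and pairs $(K',\rho')$ in the last two paragraphs. The reverse direction is the more delicate of the two: constructing the element $a$ from $e_{\rho'}$ requires care because $A$ consists of $\End(\rho^\vee)$-valued functions rather than scalar ones, and verifying the desired action on $\sigma^\rho$ uses Frobenius reciprocity for compact induction together with semisimplicity of $\mathcal{R}(K')$. Once these technical pieces are in place the remaining verifications are routine bookkeeping, and the theorem follows.
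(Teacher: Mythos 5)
Your high-level strategy---transport to $A$-modules via Theorem \ref{type}, then show that the natural bases for the two topologies generate the same open sets---is genuinely different from the paper's. The paper works with \emph{closed} sets and leans on Theorem \ref{fell2}: $Z \subset W$ is Fell-closed if and only if $i_! : \mathcal{R}(G)_Z \to \mathcal{R}(G)_W$ has a left adjoint, and then translates $\widetilde{i^*M} = \widetilde{M}/J_Z\widetilde{M}$ to move between the representation side and the $A$-module side in one stroke. Your approach, if it worked, would be more elementary in that it avoids invoking Theorem \ref{fell2} at all. But as written it has two genuine gaps, one in each direction.

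In the direction ``each $U_a$ is Fell-open,'' the step from the nonvanishing of $\varphi\circ a$ on $\pi$ to the containment $\mathscr{V}(K,\rho)\cap\mathscr{V}(K',\rho')\subset U_a$ is not justified and is, I believe, false for an arbitrary choice of $K'$ and $\rho'$: the condition ``$\rho'$ appears in $\sigma|_{K'}$'' carries no information about whether the Hecke operator $a$ acts nontrivially on $\sigma^\rho$, unless $(K',\rho')$ is engineered to see $a$. Worse, your $K'$ is taken \emph{inside} $K$, whereas the passage from a basic Fell neighborhood $\mathscr{U}(\pi,v,\xi,B,\epsilon)$ to a $\mathscr{V}(K',\rho')$ neighborhood (in the proof of Theorem \ref{Fell}) requires $K'$ to \emph{contain} the compact set $B$, which here must contain the support of $a$ viewed in $e_\rho\mathcal{H}(G)e_\rho$, i.e.\ typically $K'\supsetneq K$. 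Also, the phrase ``$a$ is bi-$K'$-equivariant'' for $K' \subsetneq K$ has no clear meaning for $a \in \mathcal{H}(G,\rho)$, which is by definition $(\rho^\vee,K)$-bi-equivariant; the correct continuity statement lives in $e_\rho\mathcal{H}(G)e_\rho$. The cleanest repair is to first show $\mathscr{U}(\pi,v,\xi,B,\epsilon)\subset U_a$ by a direct estimate on the matrix coefficient $\int_B a(g)\langle\xi,\sigma(g)w\rangle\,dg$, and then invoke Theorem \ref{Fell} as a black box to get a $\mathscr{V}$ inside the $\mathscr{U}$.

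In the direction ``each $\mathscr{V}(K,\rho)\cap\mathscr{V}(K',\rho')$ is Jacobson-open,'' your claim to find a single $a \in A$ with $U_a\cap\mathscr{V}(K,\rho)=\mathscr{V}(K,\rho)\cap\mathscr{V}(K',\rho')$ is too strong: no single element will do in general. Nor does the naive candidate built from $e_{\rho'}$ (something like the image of $e_\rho * e_{\rho'} * e_\rho$) have the required nonvanishing: even after arranging $K'\subset K$ and $\rho'$ appearing in $\pi|_{K'}$, there is no guarantee the chosen $\rho'$ meets the $\rho$-isotypic part of $\pi$, so $e_\rho e_{\rho'}e_\rho$ can annihilate $\pi^\rho$ while $\rho'$ still occurs in $\pi|_{K'}$. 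What does work, and salvages your strategy, is passing to the two-sided ideal $I = e_\rho\mathcal{H}(G)e_{\rho'}\mathcal{H}(G)e_\rho$ in $e_\rho\mathcal{H}(G)e_\rho$ (or its image under the Morita isomorphism with $A$): using irreducibility of $\sigma$ one checks $e_{\rho'}\sigma \neq 0 \Leftrightarrow I\cdot e_\rho\sigma\neq 0$, so the set in question is $D(I)$, an open set in the Jacobson topology, and then one picks $a\in I$ witnessing $\pi\in D(a)\subset D(I)$. Once both repairs are made, your route goes through and is a legitimate alternative to the paper's argument via Theorem \ref{fell2}; but as stated the proposal leaves the heart of both directions unproved.
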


We point out that in \cite{GK}, the theorem is proved for trivial $\rho$.

\subsection{The Bernstein center of $\mathcal{R}(G)$}

For any abelian category $\mathcal{A}$ we denote by $\mathscr{Z}(\mathcal{A})$ the Bernstein center of $\mathcal{A}$, that is, the commutative $\C$-algebra of endomorphisms of the identity functor on $\mathcal{A}$. For an $\ell$-group $G$ we may consider the Bernstein center of $\mathcal{R}(G)$, which we abusively denote as $\mathscr{Z}(G)$ and call the Bernstein center of $G$, or more generally, if $W \subset \widehat{G}$ we write $\mathscr{Z}_W(G)$ for $\mathscr{Z}(\mathcal{R}(G)_W)$. It is possible to say a few things about $\mathscr{Z}_W(G)$ in our situation, meaning when $G$ is a tame $\ell_c$-group. Notably, it may be considered as an algebra of $\C$-valued functions on $W$, since in light of Proposition \ref{schur} an element of $\mathscr{Z}_W(G)$ acts on each $\pi \in W$ by a scalar. In fact, if we write $\Fun(W)$ for the $\C$-algebra of $\C$-valued functions on $W$ under pointwise multiplication, then the map $\mathscr{Z}_W(G) \longrightarrow \Fun(W)$ thus defined is an algebra homomorphism. The following fact is an easy consequence of Lemma \ref{exact}.

\begin{proposition}

The natural algebra homomorphism $\mathscr{Z}_W(G) \longrightarrow \Fun(W)$ is injective.

\end{proposition}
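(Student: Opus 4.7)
The goal is to show that if $z \in \mathscr{Z}_W(G)$ maps to the zero function in $\Fun(W)$, meaning $z_\pi = 0$ for every $\pi \in W$, then $z_M = 0$ for every $M \in \mathcal{R}(G)_W$. My plan is to use Lemma \ref{exact} to test the vanishing of $z_M$ stalk by stalk. Applying that lemma to the sequence $M \xrightarrow{z_M} M \xrightarrow{\id_M} M$, which is exact at the middle term precisely when $z_M = 0$, reduces the problem to showing that the induced endomorphism of each stalk $M(\pi) = \Hom_G(\pi, M_\pi)$ vanishes. By functoriality of the stalk and naturality of $z$, this induced endomorphism is post-composition with $z_{M_\pi} : M_\pi \to M_\pi$, so it suffices to prove $z_{M_\pi} \circ \alpha = 0$ for every $\alpha \in \Hom_G(\pi, M_\pi)$.

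Next I would split into cases on $\pi$. If $\pi \notin W$, then $M_\pi$ is a quotient of $M \in \mathcal{R}(G)_W$ whose support lies in both $W$ and $\{\pi\}$, hence $M_\pi = 0$ and there is nothing to check. If $\pi \in W$, then $M_\pi$ lies in $\mathcal{R}(G)_\pi \subset \mathcal{R}(G)_W$ by Lemma \ref{point}, so I may apply Theorem \ref{skyscraper} to write $M_\pi \cong V \otimes \pi$ for some complex vector space $V$, with every morphism $\alpha : \pi \to M_\pi$ of the form $\alpha_v(x) = v \otimes x$ for some $v \in V$. Naturality of $z$ with respect to $\alpha_v$, a morphism in $\mathcal{R}(G)_W$, then yields
\[
z_{M_\pi} \circ \alpha_v = \alpha_v \circ z_\pi = 0,
\]
which is exactly the required vanishing.

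I do not expect a serious obstacle: the argument is essentially formal. The only point requiring care is to verify at each step that the relevant objects and morphisms ($M_\pi$, the canonical quotient $M \to M_\pi$, and the maps $\alpha_v : \pi \to M_\pi$) all live in $\mathcal{R}(G)_W$, so that naturality of $z$ can legitimately be invoked there. The substantive content is entirely supplied by the two earlier results: Theorem \ref{skyscraper} pins down the structure of $M_\pi$ tightly enough to promote the pointwise vanishing of $z$ on $W$ to the vanishing of $z_{M_\pi}$, while Lemma \ref{exact} propagates the resulting stalkwise vanishing back to $z_M$ itself.
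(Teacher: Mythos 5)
Your proof is correct and matches the paper's intent: the paper gives no explicit argument, stating only that the proposition is ``an easy consequence of Lemma \ref{exact},'' and your reconstruction — testing the vanishing of $z_M$ on stalks, then using Lemma \ref{point}, Theorem \ref{skyscraper}, and naturality of $z$ to handle each stalk — is a valid elaboration of that hint. As a minor streamlining, note that once you know $z_\pi = 0$ for all $\pi \in W$, Corollary \ref{zeromap} gives the conclusion almost immediately: for any $M \in \mathcal{R}(G)_W$, $\pi \in \widehat{G}$, and $g \in \Hom_G(M,\pi)$, either $g = 0$ or $\pi \in \Supp M \subset W$, and in the latter case naturality yields $g \circ z_M = z_\pi \circ g = 0$, so $z_M = 0$ without ever invoking Theorem \ref{skyscraper} or the structure of $M_\pi$.
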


In case $G$ is abelian and $W = \widehat{G}$, the image of this homomorphism is $C^{\infty}(\widehat{G})$, the algebra of locally constant $\C$-valued functions on $\widehat{G}$. This follows formally from the equivalence of $\mathcal{R}(G)$ with $\Sh(\widehat{G})$. Indeed, if $X$ is any topological space, it is well known that the Bernstein center of $\Sh(X)$ is $C^{\infty}(X)$. Our final theorem is a generalization of this.

\begin{theorem}

If $G$ is a tame $\ell_c$-group and $W \subset \widehat{G}$ is locally closed, then the image of the natural homomorphism $\mathscr{Z}_W(G) \longrightarrow \Fun(W)$ is $C^{\infty}(W)$.

\label{bernstein}

\end{theorem}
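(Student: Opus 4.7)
The plan is to prove the two containments of $\im(\mathscr{Z}_W(G) \to \Fun(W)) = C^{\infty}(W)$ separately.

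For the easier containment $C^{\infty}(W) \subseteq \im$, I would argue as follows. Given a locally constant $f : W \to \C$, the fibers $W_{\lambda} := f^{-1}(\lambda)$ are clopen and partition $W$; each $W_{\lambda}$ is closed in $W$, hence locally closed in $\widehat{G}$. By Theorem \ref{fell2} the inclusion $\mathcal{R}(G)_{W_{\lambda}} \hookrightarrow \mathcal{R}(G)_W$ admits an exact left adjoint $i_{\lambda}^*$. For each $M \in \mathcal{R}(G)_W$, set $M_{\lambda} := i_{\lambda}^* M$; using Lemma \ref{exact} together with the observation that $(i_{\lambda}^* M)(\pi) = M(\pi)$ for $\pi \in W_{\lambda}$ and vanishes otherwise, I would verify that the canonical map $M \to \bigoplus_{\lambda} M_{\lambda}$ is an isomorphism (only one summand contributes at each stalk, so the coproduct agrees with the smooth product). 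This decomposition is functorial in $M$, so defining $z_M$ to act by multiplication by $\lambda$ on $M_{\lambda}$ produces an element $z \in \mathscr{Z}_W(G)$ whose associated function on $W$ is $f$.

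For the reverse containment, I would take $z \in \mathscr{Z}_W(G)$, $\pi_0 \in W$, and $\lambda_0 = z(\pi_0)$, and seek an open neighborhood of $\pi_0$ in $W$ on which $z$ equals $\lambda_0$. Writing $W = F \cap O$ with $F$ closed and $O$ open in $\widehat{G}$, Theorem \ref{Fell} furnishes a compact open $K \subset G$ and $\rho \in \widehat{K}$ with $\pi_0 \in \mathscr{V}(K,\rho) \subseteq O$, so that $U := \mathscr{V}(K,\rho) \cap W = \mathscr{V}(K,\rho) \cap F$ is open in $W$ and closed in $\mathscr{V}(K,\rho)$. Theorem \ref{type} identifies $\mathcal{R}(G)_{\mathscr{V}(K,\rho)} = \mathcal{R}_{\rho}(G)$ with $A$-$\mathbf{mod}$ for $A := \mathcal{H}(G,\rho)$, and Theorem \ref{fell3} identifies $\mathscr{V}(K,\rho)$ with $\widehat{A}$ in the Jacobson topology. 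Since $U$ is closed in $\mathscr{V}(K,\rho)$, Theorem \ref{fell2} produces a representing object $P_U := i_U^*(\cInd_K^G \rho)$ for the $\rho$-isotypic functor on $\mathcal{R}(G)_U$, and the restriction of $z$ acts on $P_U$ by a central endomorphism $\zeta$ whose scalar action on each simple object recovers $z|_U$. Lifting $\zeta$ through $\cInd_K^G \rho \twoheadrightarrow P_U$ to an element of $A$, it is supported in some compact open subgroup $K' \supseteq K$, so its scalar action on $\pi^{\rho}$ depends only on $\pi|_{K'}$; picking $\rho' \in \widehat{K'}$ with $\pi_0 \in \mathscr{V}(K',\rho')$ and intersecting with $U$ should yield the desired constancy neighborhood.

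The main obstacle will be making the last step of the reverse containment rigorous: deducing from the compact support of $\zeta$ in $K'$ that its scalar action on simple $A$-modules is locally constant on a basic open neighborhood of $\pi_0$ in the Jacobson topology on $\widehat{A}$. This is where all of Theorems \ref{Fell}, \ref{type}, \ref{fell2}, \ref{fell3} and Lemma \ref{exact} converge, with the technical heart being a careful analysis of how the $\rho'$-isotypic structure of $\pi|_{K'}$ for $\pi \in \mathscr{V}(K',\rho')$ forces $\zeta(\pi) = \zeta(\pi_0)$. The easier containment, by contrast, is essentially category-theoretic and uses only Theorem \ref{fell2} and Lemma \ref{exact}.
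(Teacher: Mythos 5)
Your argument for the containment $C^{\infty}(W) \subseteq \im(\mathscr{Z}_W(G) \to \Fun(W))$ is essentially the paper's: decompose $W$ into the level sets $U_i$ of $f$, invoke Theorem \ref{fell2} and Lemma \ref{exact} to show $\bigoplus_i M_{U_i} \to M$ is an isomorphism, and let $\varphi_M$ scale each summand by the value of $f$. One thing to state explicitly, which the paper does, is that since each $U_i$ is both open and closed in $W$ the composite $i_{U_i}^! M \to M \to i_{U_i}^* M$ is an isomorphism, so $M_{U_i}$ is genuinely a subrepresentation and the direct sum makes sense before you check it on stalks.

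For the harder containment your sketch sets up the right local picture (restrict to $\mathscr{V}(K,\rho) \cap W$ with $Z$ closed there, pass to $A = \mathcal{H}(G,\rho)$ via Theorem \ref{type}, look at $N = \cInd_K^G \rho$ and its quotient $i^*N$), but the finishing step is a genuine gap, and you say so yourself. The difficulty with ``lift $\zeta \in \End_A(A/J_Z)$ to $a \in A$ supported in some $K'$'' is that knowing $a \in \mathcal{H}(K',\rho)$ controls only that the operator on $\widetilde{\pi} = \Hom_K(\rho,\pi|_K)$ is built from $\pi|_{K'}$; it does not by itself force that operator to be a fixed scalar, nor locally constant in $\pi$ across a Jacobson-open neighborhood, because the $\mathcal{H}(K',\rho)$-module structure of $\widetilde{\pi}$ still varies with $\pi$. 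The paper closes this gap by a different mechanism. It normalizes so that $\theta_\pi = 0$, chooses $(K,\rho)$ with $|\rho : \pi|_K| = 1$, and forms $M = \ker(i^*N \twoheadrightarrow \pi)$; the condition $\theta_\pi = 0$ forces $\theta_{i^*N}$ to land in $M$, and Frobenius reciprocity turns the resulting map $N \to M$ into an element $\alpha \in \mathscr{M}(K,\rho)$. Crucially $\pi \notin \Supp M$ (by injectivity of $\pi$, which kills $\operatorname{Ext}^1(\pi,\pi)$), so Lemma \ref{shrink} applies and produces $(L,\sigma) \to (K,\rho)$ with $\pi \in \mathscr{V}(L,\sigma)$ and $\alpha$ annihilated by the restriction. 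Unwinding this and using exactness of $\cInd_L^G$ together with the naturality square for $\theta$ gives $\theta_{i^*P} = 0$ for $P = \cInd_L^G \sigma$, hence $\theta_\zeta = 0$ for all $\zeta \in \mathscr{V}(L,\sigma) \cap W$. The ingredient you are missing is precisely Lemma \ref{shrink} applied to this carefully constructed $\alpha$; without it the compact-support heuristic does not yield the required local constancy.
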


The rest of this paper consists of some additional technical exposition and the proofs of the theorems and lemmas stated above.

\section{More on Hecke algebras}

Before proving our first characterization of the Fell topology, we include a general discussion of Hecke algebras. For now $G$ is any $\ell$-group, and we must also fix a left Haar measure on $G$.

Recall that we have written $\mathcal{H}(G)$ for the Hecke algebra of $G$. If $K \subset G$ is a compact open subgroup, the subalgebra of bi-$K$-invariant functions will be denoted by $\mathcal{H}(G,K)$. Explicitly, $\mathcal{H}(G,K)$ consists of those $f \in \mathcal{H}(G)$ such that for any $g \in G$ and $k_1,k_2 \in K$, we have $f(k_1gk_2) = f(g)$. It is easy to show that $\mathcal{H}(G) = \bigcup \mathcal{H}(G,K)$, where the union runs over all compact open subgroups $K \subset G$: every function in the Hecke algebra is just a finite linear combination of indicator functions of double cosets $KgK$ for some sufficiently small compact open subgroup $K \subset G$.

The subalgebra $\mathcal{H}(G,K)$ has a unity element, namely $e_K = \frac{1}{\mu(K)} \cdot \mathbbm{1}_K$, where $\mathbbm{1}_K$ is the indicator function of $K$. In fact, $\mathcal{H}(G,K) = e_K * \mathcal{H}(G) * e_K$. Thus if $(\pi,V)$ is any smooth representation of $G$, the subspace of $K$-invariant vectors $V^K = e_k \cdot V$ is naturally an $\mathcal{H}(G,K)$-module. In particular, if $\pi$ is irreducible then either $V^K = 0$ or $V^K$ is a simple $\mathcal{H}(G,K)$-module (this fact is not specific to our situation, and holds much more generally for modules over an idempotented algebra).

Now if $\rho \in \widehat{K}$ we can define its character $\chi_{\rho} : K \to \C$ in the usual way, as irreducible representations of compact $\ell$-groups are finite-dimensional. Since $\rho$ factors through a representation of a finite quotient of $K$, we can use the orthogonality relations for characters of finite groups to see that $e_{\rho} = \frac{\chi_{\rho}(1)}{\mu(K)} \cdot \chi_{\rho}$ is an idempotent in the Hecke algebra. Moreover, just as in the finite case, if $(\pi,V)$ is any representation of $G$ then $e_{\rho} \cdot V$ is the sum of all $K$-subrepresentations of $\pi$ which are isomorphic to $\rho$. Observe also that $e_{\rho} \cdot V$ is naturally a module for the unital subalgebra $e_{\rho} * \mathcal{H}(G) * e_{\rho}$ of $\mathcal{H}(G)$, and that if $\pi$ is irreducible then $e_{\rho} \cdot V = 0$ or $e_{\rho} \cdot V$ is a simple module for $e_{\rho} * \mathcal{H}(G) * e_{\rho}$. If $\rho$ is trivial then $e_{\rho} = e_K$, so these remarks generalize the previous paragraph.

Recall that in Section 1 we defined a certain Hecke algebra $\mathcal{H}(G,\rho)$ which plays a role in our third characterization of the Fell topology. In fact, there is a canonical isomorphism of $\C$-algebras \[ \mathcal{H}(G,\rho) \otimes_{\C} \End_{\C}(W) \longrightarrow e_{\rho} * \mathcal{H}(G) * e_{\rho}, \] which means the algebras $\mathcal{H}(G,\rho)$ and $e_{\rho} * \mathcal{H}(G) * e_{\rho}$ are Morita equivalent. Note that these two algebras may be identified when $\rho$ is trivial (then they both coincide with $\mathcal{H}(G,K)$), but otherwise $\mathcal{H}(G,\rho)$ is not a subalgebra of $\mathcal{H}(G)$. 

\section{The proof of Theorem \ref{Fell}}

\label{topology}

Now we tie up the first loose thread from Section \ref{intro} and prove the characterization of the Fell topology stated in Theorem \ref{Fell}. Throughout this section $G$ will be any tame $\ell_c$-group.

There is a canonical choice of topology on the dual space $\widehat{G}$: if $(\pi,V) \in \widehat{G}$ pick a finite collection of vectors $v_1,\cdots,v_n \in V$, another $\xi_1,\cdots,\xi_n \in \widetilde{V}$, a compact subset $B \subset G$, and $\epsilon > 0$, then let $\mathscr{U}(\pi,v_j,\xi_j,B,\epsilon)$ consist of those $(\rho,W) \in \widehat{G}$ for which we can find $w_1,\cdots,w_n \in W$ and $\zeta_1,\cdots,\zeta_n \in \widetilde{W}$ satisfying \[ \big| \ \langle \xi_j, \pi(g)(v_j) \rangle - \langle \zeta_j, \rho(g)(w_j) \rangle \ \big| < \epsilon \] for all $1 \leq j \leq n$ and $g \in B$. The collection of all such $\mathscr{U}$'s forms a basis of neighborhoods for the Fell topology on $\widehat{G}$, which we have characterized in a way which we hope the reader finds more understandable.

The following technical lemma is critical in our proof of Theorem \ref{Fell}.

\begin{lemma}

Fix $(\pi,V) \in \widehat{G}$ and a nonzero finite-dimensional subspace $W \subset V$. Then there exists a compact open subgroup $K \subset G$ such that the $K$-subrepresentation generated by $W$ is irreducible.

\label{lemma1}

\end{lemma}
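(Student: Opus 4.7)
The plan is to reduce the problem to a finite-dimensional Hecke-module statement, and then exhibit $K$ using the $\ell_c$ hypothesis. First I would use smoothness to pick a compact open subgroup $L_0 \subset G$ that fixes $W$ pointwise, so $W \subset V^{L_0}$; tameness then makes $V^{L_0}$ finite-dimensional. By the standard fact about idempotented algebras recalled in Section 2, $V^{L_0} = e_{L_0} \cdot V$ is simple as a module over the unital subalgebra $\mathcal{H}(G,L_0) = e_{L_0} * \mathcal{H}(G) * e_{L_0}$ of bi-$L_0$-invariant functions. Schur applied to this finite-dimensional simple module over a $\C$-algebra gives $\End_{\mathcal{H}(G,L_0)}(V^{L_0}) = \C$, and Jacobson density then produces a surjection $\mathcal{H}(G,L_0) \twoheadrightarrow \End_\C(V^{L_0})$.

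The central step is to push this surjection down to functions supported in a single compact open subgroup of $G$. I would pick finitely many $f_1,\ldots,f_m \in \mathcal{H}(G,L_0)$ whose images in $\End_\C(V^{L_0})$ span as a $\C$-vector space, and invoke the $\ell_c$ hypothesis to find a compact open subgroup $K \supset L_0$ containing the supports of all the $f_i$ (each support is compact, so by the filtered-union property each lies in some compact open subgroup, and finitely many such subgroups together with $L_0$ are contained in a common compact open subgroup). Writing $\mathcal{A} \subset \mathcal{H}(G,L_0)$ for the subalgebra of functions supported in $K$, each $f_i$ lies in $\mathcal{A}$, so $\mathcal{A} \twoheadrightarrow \End_\C(V^{L_0})$ is already surjective and $V^{L_0}$ is a simple $\mathcal{A}$-module.

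To conclude, I would decompose $V|_K = \bigoplus_\tau V(\tau)$ into $K$-isotypic components (using that $K$ is a compact $\ell$-group, so $\mathcal{R}(K)$ is semisimple with finite-dimensional irreducibles) and take $L_0$-invariants to obtain a decomposition of $\mathcal{A}$-modules $V^{L_0} = \bigoplus_\tau V(\tau)^{L_0}$, since $\mathcal{A}$ acts through $K$ and each $V(\tau)$ is $K$-stable. Simplicity of $V^{L_0}$ forces all but one summand $V(\tau_0)^{L_0}$ to vanish, and the factorization $V(\tau_0) \cong \tau_0 \otimes \Hom_K(\tau_0, V|_K)$ with $\mathcal{A}$ acting only on the first factor further forces the multiplicity space $\Hom_K(\tau_0, V|_K)$ to be one-dimensional. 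Therefore $V(\tau_0) \cong \tau_0$ is an irreducible $K$-subrepresentation of $V$ containing $V^{L_0} \supset W$, and by $K$-irreducibility it must coincide with $\Span(K \cdot W)$.

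The main obstacle is precisely the middle step: converting the abstract surjection $\mathcal{H}(G,L_0) \twoheadrightarrow \End_\C(V^{L_0})$ into one realized by a subalgebra of functions supported in a common compact open subgroup. This is exactly where the $\ell_c$ hypothesis does the essential work, and it is the structural reason one expects the lemma to fail for $\ell$-groups that lack enough compact open subgroups ``at the top''.
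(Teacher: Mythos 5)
Your proof is correct and follows essentially the same strategy as the paper: reduce to simplicity of the finite-dimensional module $V^{L_0}$ over the bi-$L_0$-invariant Hecke algebra, use the $\ell_c$ property to replace $\mathcal{H}(G,L_0)$ by the subalgebra $\mathcal{A} = \mathcal{H}(K,L_0)$ of functions supported in a sufficiently large compact open $K$, and conclude that a suitable $K$-subrepresentation is irreducible. The only cosmetic difference is in the last step: the paper shows directly that the $K$-subrepresentation generated by $V^{L_0}$ is indecomposable by taking $L_0$-invariants of any putative decomposition, whereas you decompose all of $V|_K$ into isotypic components and compute the multiplicity; your invocation of Jacobson density is likewise a slight shortcut past the paper's remark that one only needs $\im\varphi$ to be finite-dimensional, not that $\varphi$ is onto.
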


\begin{proof}

Choose a compact open subgroup $L \subset G$ such that $W \subset V^L$, so we know that $V^L$ is a simple $\mathcal{H}(G,L)$-module. Moreover, $V^L$ is finite-dimensional since $G$ is tame. Observe that $\mathcal{H}(G,L)$ is a filtered union of finite-dimensional subalgebras $\mathcal{H}(K,L)$, where $K$ ranges over compact open subgroups of $G$ containing $L$, by the $\ell_c$ assumption. Now let \[ \varphi : \mathcal{H}(G,L) \longrightarrow \End_{\C}(V^L) \] be the action map and put $A = \im \varphi$ (in fact, $\varphi$ must be surjective, but this is not necessary for the proof). Then $A$ is a finite-dimensional quotient of $\mathcal{H}(G,L)$, so there exists a compact open subgroup $K \supset L$ such that $\mathcal{H}(K,L)$ surjects onto $A$. But this means $V^L$ is already a simple $\mathcal{H}(K,L)$-module.

Now let $(\rho,X)$ be the $K$-subrepresentation generated by $V^L$ (so if we can show that $\rho$ is irreducible, $W$ generates $X$ also). Suppose $X$ decomposes as a $K$-representation into $X_1 \oplus X_2$. Then \[ V^L = X^L = X_1^L \oplus X_2^L \] implies $X_1^L = 0$ or $X_2^L = 0$ by simplicity of $V^L$, whence $V^L \subset X_1$ or $V^L \subset X_2$. Thus $X_1 = X$ or $X_2 = X$ as desired, since $V^L$ generates $X$.

\end{proof}

It seems natural to mention here one consequence of the lemma, which does not play a role in the proof of Theorem \ref{Fell} but will help us later in this paper.

\begin{lemma}

For any $\pi \in \widehat{G}$, we can find a compact open subgroup $K \subset G$ and $\rho \in \widehat{K}$ such that $| \rho : \pi|_K | = 1$, which is to say the space $\Hom_K(\rho,\pi|_K)$ is one-dimensional.

\end{lemma}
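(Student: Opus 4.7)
The plan is to reuse the construction appearing in the proof of the preceding lemma and then invoke Schur's lemma at the Hecke-algebra level. Starting from any nonzero $v \in V$, I choose a compact open subgroup $L \subset G$ stabilizing $v$, so that $V^L$ is a nonzero finite-dimensional simple $\mathcal{H}(G,L)$-module (nonzero because it contains $v$; finite-dimensional by admissibility of $\pi$, which holds by tameness; simple by irreducibility of $\pi$). Using the filtered-union decomposition $\mathcal{H}(G,L) = \bigcup_{K \supset L} \mathcal{H}(K,L)$ provided by the $\ell_c$ hypothesis, together with the finite-dimensionality of the image of $\mathcal{H}(G,L)$ in $\End_{\C}(V^L)$, I enlarge $L$ to a compact open subgroup $K \subset G$ for which $V^L$ is already a simple $\mathcal{H}(K,L)$-module --- this is exactly the enlargement step used in the preceding lemma. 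Let $\rho$ be the $K$-subrepresentation of $V$ generated by $V^L$, which is irreducible by that lemma; this is our candidate element of $\widehat{K}$.

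To compute $\Hom_K(\rho, \pi|_K)$, observe first that the inclusion $V^L \subset \rho$ forces $\rho^L = \rho \cap V^L = V^L$. Taking $L$-invariants therefore defines a restriction map
\[ \Hom_K(\rho, V) \longrightarrow \Hom_{\mathcal{H}(K,L)}(\rho^L, V^L) = \End_{\mathcal{H}(K,L)}(V^L). \]
This map is injective because $\rho^L = V^L$ generates $\rho$ as a $K$-representation, so any $K$-equivariant map out of $\rho$ is determined by its values on $\rho^L$. Since $V^L$ is a finite-dimensional simple module over the $\C$-algebra $\mathcal{H}(K,L)$, Schur's lemma (together with $\C$ being algebraically closed) identifies $\End_{\mathcal{H}(K,L)}(V^L)$ with $\C$. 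Hence $\dim \Hom_K(\rho, \pi|_K) \leq 1$, and since the tautological inclusion $\rho \hookrightarrow V|_K$ is a nonzero element of this Hom space, the dimension is exactly one, as required.

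The only genuinely substantive step is arranging the pair $L \subset K$ so that $V^L$ is simple as an $\mathcal{H}(K,L)$-module; this is precisely where the $\ell_c$ and tameness hypotheses are used, and it is already carried out inside the preceding lemma. Once $\rho$ has been produced in this way, the injectivity of the $L$-invariants functor on morphisms out of $\rho$ and the scalar identification $\End_{\mathcal{H}(K,L)}(V^L) = \C$ close out the argument without further effort.
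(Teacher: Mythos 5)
Your proof is correct and follows essentially the same route as the paper's: produce $(L,K)$ and $\rho$ via the preceding lemma, observe that any $K$-morphism $\rho \to \pi|_K$ is determined by its restriction to $\rho^L = \pi^L$, and then conclude via Schur's lemma. The paper phrases the last step by noting the morphism carries $\rho$ into itself (and applies Schur for $K$-representations directly), whereas you descend to $\mathcal{H}(K,L)$-modules and apply Schur there; this is the same idea expressed one level of structure lower.
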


\begin{proof}

First find some compact open subgroup $L \subset G$ such that the space of $L$-invariants $\pi^L$ is nonzero. Then apply Lemma \ref{lemma1} to find a compact open subgroup $K$ such that the $K$-subrepresentation $\rho$ of $\pi$ generated by $\pi^L$ is irreducible. In particular, $\rho^L \neq 0$, and if $\rho \to \pi|_K$ is a $K$-morphism it must map $\rho^L$ into $\pi^L \subset \rho$, whence the space of such morphisms is one-dimensional by Schur's lemma.

\end{proof}

Now we are in a position to prove the theorem.

\begin{proof}[Proof of Theorem \ref{Fell}]

First we show that $\mathscr{V}(K,\rho)$ is open. Resuming our notation in the statement of the theorem, choose $(\pi,V) \in \mathscr{V}(K,\rho)$. By our remarks in section 1.2 there exists $v \in V$ such that $e_{\rho} \cdot v \neq 0$, and certainly there also exists $f \in \widetilde{V}$ such that $f(e_{\rho} \cdot v) \neq 0$. Now put $L = \ker \rho$ and $C = \max_{k \in K/L} |\chi_{\rho}(k)|$, and let $0 < \epsilon < \frac{|f(e_{\rho} \cdot v)|}{C \cdot \chi_{\rho}(1)}$. Then it is not hard to see by a direct calculation that $\mathscr{U}(\pi,v,f,K,\epsilon) \subset \mathscr{V}(K,\rho)$.

Now to prove that the $\mathscr{V}$'s form a basis for the Fell topology, let $(\pi,V) \in \widehat{G}$ be arbitary and choose $v_1,\cdots,v_n \in V$, $f_1,\cdots,f_n \in \widetilde{V}$, compact $B \subset G$, and $\epsilon > 0$. This defines a Fell neighborhood $\mathscr{U}(\pi,v_j,f_j,B,\epsilon)$ of $\pi$. Let $W$ be the $\C$-span of the $v_1,\cdots,v_n$ and apply Lemma \ref{lemma1} to find a compact open subgroup $K \subset G$ such that the $K$-subrepresentation $\rho$ generated by $W$ is irreducible. Note that we can use the fact that $G$ is an $\ell_c$-group to choose $K$ large enough so that $B \subset K$. An easy computation confirms that $\mathscr{V}(K,\rho) \subset \mathscr{U}(\pi,v_j,f_j,B,\epsilon)$.

\end{proof}

\section{Proving the results on stalks in the category $\mathcal{R}(G)$}

\label{stalkfunct}

In this section we prove the assertions made about the functors $\pi \mapsto M(\pi)$ for $\pi \in \widehat{G}$, with the goal of applying some sheaf-theoretic techniques to smooth representations. These results are independent of our previous work but play an important role in the proofs that follow. Recall that we denote the largest quotient of a smooth representation $M$ supported at $\pi$ by $M_{\pi}$, and that $M(\pi)$ is the ``multiplicity space" of $\pi$ in $M_{\pi}$.

First we need some technical consequences of the $\ell_c$-condition. Recall that we can define the so-called Jacquet functor $J_G : \mathcal{R}(G) \to \C-\mathbf{vect}$ by sending $M \in \mathcal{R}(G)$ to the the space of coinvariants, i.e. the quotient of $M$ by the $\C$-span of $\{ g \cdot v - v \ | \ v \in M \}$. This extends to a right exact functor which is left adjoint to the functor $\C-\mathbf{\vect} \to \mathcal{R}(G)$ that sends a vector space $V$ to the trivial representation of $G$ on $V$.

\begin{proposition}

For an $\ell_c$-group $G$, the functor $J_G$ is exact.

\end{proposition}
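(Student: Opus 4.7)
The plan is to realize $J_G$ as a filtered colimit of exact functors, so that exactness follows from the exactness of filtered colimits in $\C\text{-}\mathbf{vect}$. Right exactness of $J_G$ is automatic since it is a left adjoint, so the content is left exactness.

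The first step is to use the $\ell_c$-hypothesis to write the coinvariants at the level of $G$ as a filtered colimit over compact open subgroups. Concretely, for any smooth $M \in \mathcal{R}(G)$, let $M_K = M/\langle kv - v \mid k \in K,\ v \in M\rangle$ for each compact open subgroup $K \subset G$. Since $G$ is a filtered union of its compact open subgroups, every element of the form $gv - v$ lies in some $M_K$'s defining subspace, and the transition maps $M_K \to M_{K'}$ for $K \subset K'$ are surjective. Taking the filtered colimit over all $K$ yields a canonical isomorphism $J_G(M) \cong \varinjlim_K M_K$, functorial in $M$.

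The second step is to observe that each functor $M \mapsto M_K$ is exact. Because $K$ is a compact $\ell$-group, the category $\mathcal{R}(K)$ is semisimple, and the Haar-measure idempotent $e_K \in \mathcal{H}(G,K)$ projects $M$ onto the space of $K$-invariants $M^K$. The quotient $M_K$ is canonically identified with $e_K \cdot M = M^K$, and the functor $M \mapsto e_K \cdot M$ is exact (multiplication by an idempotent always is, or equivalently $M^K$ is a direct summand of $M$ by semisimplicity).

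Finally, since filtered colimits are exact in $\C\text{-}\mathbf{vect}$, the colimit of the exact functors $M \mapsto M_K$ is exact, so $J_G$ is exact. The only mildly delicate point, and the one I would take the most care with, is the first step: verifying that the colimit system is genuinely filtered and that $J_G(M)$ is computed by this colimit. This is where the $\ell_c$-hypothesis enters essentially — without it, an element $g \in G$ need not lie in any compact open subgroup, and one cannot reduce computation of coinvariants to the compact case.
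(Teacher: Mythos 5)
Your argument is correct, and it is essentially the standard proof (the one that appears, in essence, in the Bernstein--Zelevinsky reference the paper points to; the paper itself gives no proof and simply cites Jacquet). Both steps are sound: the $\ell_c$-hypothesis makes $\{\langle kv-v : k\in K\rangle\}_K$ a filtered system of subspaces whose union is $\langle gv - v : g\in G\rangle$, so $J_G(M)\cong\varinjlim_K M_K$; and for compact open $K$ the idempotent $e_K$ realizes the $K$-coinvariants $M_K$ as the direct summand $M^K = e_K\cdot M$, making each $M\mapsto M_K$ exact. One small point worth making explicit is the identification $M_K\cong M^K$: the kernel of $e_K\colon M\to M^K$ is precisely $\Span\{kv-v\}$, since $e_K$ kills each $kv-v$ and conversely $w - e_K w$ is a finite average of elements $w - kw$ for smooth $M$. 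With that noted, the filtered colimit of exact sequences is exact, and you are done. The payoff of writing it out rather than citing, as you do, is that one sees exactly where the $\ell_c$-hypothesis is used and that the argument requires nothing beyond semisimplicity of $\mathcal{R}(K)$ for $K$ compact.
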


Jacquet proved this proposition: see, for instance, \cite{BZ}.

\begin{lemma}

If $G$ is an $\ell_c$-group and $M \in \mathcal{R}(G)$ the smooth dual $M^{\vee}$ is an injective object in $\mathcal{R}(G)$.

\label{injective}

\end{lemma}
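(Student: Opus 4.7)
The plan is to verify injectivity by showing that the functor $\Hom_G(-,M^{\vee}): \mathcal{R}(G)^{\mathrm{op}} \to \C-\mathbf{vect}$ is exact, realizing it as a composition of three exact functors. Concretely, I would establish a natural isomorphism
\[ \Hom_G(N, M^{\vee}) \;\cong\; \bigl(J_G(N \otimes_{\C} M)\bigr)^{*} \]
for $N \in \mathcal{R}(G)$, where $G$ acts diagonally on $N \otimes_{\C} M$. Granting this identification, the right-hand side is exact in $N$ for three independent reasons: tensoring with $M$ over $\C$ is exact because $\C$ is a field, the Jacquet functor $J_G$ is exact by the proposition immediately preceding the lemma (this is the only step that uses the $\ell_c$ hypothesis), and the linear dual $V \mapsto V^{*}$ is exact on $\C$-vector spaces.

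To produce the identification, I would first observe that smoothness of $N$ forces any $G$-equivariant map $f : N \to M^{*}$ to land in $(M^{*})^{sm} = M^{\vee}$: if $n \in N$ has open stabilizer $K$, then $f(n) = f(kn) = k \cdot f(n)$ for all $k \in K$, so $f(n) \in (M^{*})^{K}$. Hence $\Hom_G(N, M^{\vee}) = \Hom_G(N, M^{*})$. Next, the usual tensor-hom adjunction $\Hom_{\C}(N, M^{*}) \cong (N \otimes_{\C} M)^{*}$ sends $f$ to $\tilde f$ with $\tilde f(n \otimes m) = f(n)(m)$; a direct check shows that $f$ is $G$-equivariant (with respect to the contragredient action on $M^{*}$) if and only if $\tilde f$ is fixed by the diagonal action of $G$ on $N \otimes_{\C} M$. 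Finally, for any representation $V$ of $G$ there is a canonical isomorphism $(V^{*})^{G} \cong (V_{G})^{*}$, since a linear functional on $V$ is $G$-invariant precisely when it annihilates the span of $\{gv - v\}$. Stringing together these three isomorphisms yields the displayed formula.

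I expect no serious obstacle here, as the proof is essentially a sequence of adjunctions; the substantive input is Jacquet's exactness theorem, which is precisely where the $\ell_c$-hypothesis is essential. The only minor point that deserves care is verifying that the various $G$-actions line up correctly under the tensor-hom adjunction so that the isomorphism is genuinely $G$-equivariant before passing to invariants.
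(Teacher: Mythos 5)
Your proof is correct and follows essentially the same route as the paper: both identify $\Hom_G(N,M^{\vee})$ with $\Hom_{\C}(J_G(N\otimes_{\C} M),\C)$ via the smoothness of $N$, the tensor-hom adjunction, and the invariants/coinvariants adjunction, and then invoke exactness of tensoring, of $J_G$ (the $\ell_c$ input), and of the linear dual. Your write-up is if anything a bit more careful, spelling out the verification that equivariant maps to $M^*$ automatically land in $M^{\vee}$ and that the $G$-actions match under the adjunctions.
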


\begin{proof}

For any $N \in \mathcal{R}(G)$ we have natural isomorphisms 
\begin{align*}
\Hom_G(N,M^{\vee}) &= \Hom_G(N,(M^*)^{sm}) \cong \Hom_G(N,\Hom_{\C}(M,\C)) \\
&\cong \Hom_G(N \otimes_{\C} M,\C) \cong \Hom_{\C}(J_G(N \otimes_{\C} M),\C). 
\end{align*}
Then we see that $M \mapsto M^{\vee}$ is the composition of three exact functors, since vector spaces are flat and injective and $J_G$ is exact.

\end{proof}

Note that Proposition \ref{admissible} and Lemma \ref{injective} imply the following key fact.

\begin{proposition}

Irreducible smooth representations of tame $\ell_c$-groups are injective objects in $\mathcal{R}(G)$.

\label{inj}

\end{proposition}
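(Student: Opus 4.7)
The plan is to combine the two results flagged right before the statement: Proposition \ref{admissible} (characterizing admissibility via double duality) and Lemma \ref{injective} (any smooth dual is injective). The hypothesis of tameness enters exactly once, to give admissibility of an arbitrary irreducible $\pi$, after which everything is formal.

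In detail, I would start with an irreducible $\pi \in \widehat{G}$. Because $G$ is tame, $\pi$ is admissible by definition. Then Proposition \ref{admissible} identifies $\pi$ with $(\pi^{\vee})^{\vee}$ via the natural evaluation map, i.e.\ $\pi \cong (\pi^{\vee})^{\vee}$ in $\mathcal{R}(G)$. Now apply Lemma \ref{injective} to the smooth representation $M = \pi^{\vee}$: this gives that $M^{\vee} = (\pi^{\vee})^{\vee}$ is injective in $\mathcal{R}(G)$. Since injectivity is preserved under isomorphism, $\pi$ itself is injective, which is the desired conclusion.

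There is really no obstacle here: the argument is a two-line bookkeeping exercise, and the entire substance has been packaged into the two preceding results. The only thing worth double-checking is that Proposition \ref{admissible} does not require the $\ell_c$ hypothesis (it doesn't; it is stated for a general $\ell$-group $G$), and that Lemma \ref{injective} really applies to $M = \pi^{\vee}$ (it does, since $\pi^{\vee} \in \mathcal{R}(G)$ by construction). So the only nontrivial ingredients are admissibility of irreducibles (given by the tameness hypothesis) and the exactness of the Jacquet functor under the $\ell_c$ hypothesis (already invoked in proving Lemma \ref{injective}); assembling them gives the result immediately.
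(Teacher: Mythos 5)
Your proof is correct and is exactly the argument the paper intends: the paper simply states that Proposition \ref{admissible} and Lemma \ref{injective} imply the result, and your two-step unpacking (tameness gives admissibility, hence $\pi \cong (\pi^{\vee})^{\vee}$, which is injective as a smooth dual) is the intended route. There is nothing to add.
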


At this point we can apply Zorn's lemma to see that our definition of support was a reasonable choice.

\begin{lemma}

If $M \in \mathcal{R}(G)$, then $M = 0$ if and only if $\Supp M = \varnothing$.

\label{emptysupp}

\end{lemma}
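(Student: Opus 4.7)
The plan is to split into the two directions. The ``only if'' direction is immediate since the zero representation admits no nonzero homomorphism into any $\pi \in \widehat{G}$. For the converse, I would assume $M \neq 0$ and construct an irreducible quotient $\pi$ of $M$; since any nonzero $G$-morphism from $M$ to an irreducible $\pi$ is automatically surjective, producing such a $\pi$ amounts to exhibiting an element of $\Supp M$.

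First I would fix a nonzero vector $v \in M$ and consider the poset $\mathcal{P}$ of subrepresentations of $M$ which do not contain $v$, ordered by inclusion. Any chain in $\mathcal{P}$ has its union as an upper bound, and this union still avoids $v$ since $v$ lies in no element of the chain; so Zorn's lemma supplies a maximal element $N \in \mathcal{P}$. In the quotient $\overline{M} := M/N$ the image $\overline{v}$ of $v$ is nonzero, and by maximality of $N$ every nonzero subrepresentation $N' \subseteq \overline{M}$ must contain $\overline{v}$: its preimage in $M$ strictly contains $N$, hence contains $v$.

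The next step is to take $P$ to be the intersection of all nonzero subrepresentations of $\overline{M}$. This intersection contains $\overline{v}$ and is therefore nonzero, and because any nonzero subrepresentation of $P$ is already a nonzero subrepresentation of $\overline{M}$ it must contain $P$, which forces $P$ to be irreducible. By Proposition \ref{inj} the irreducible representation $P$ is injective in $\mathcal{R}(G)$, so the inclusion $P \hookrightarrow \overline{M}$ admits a retraction; composing with $M \twoheadrightarrow \overline{M}$ yields a surjection $M \twoheadrightarrow P$ that witnesses $P \in \Supp M$.

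The main obstacle, and the reason the argument is not entirely formal, is the splitting in the final step: without injectivity of irreducibles the Zorn argument alone locates an irreducible subrepresentation inside a quotient of $M$, not an irreducible quotient, and one can easily cook up nonzero modules over a ring with no irreducible quotients at all. Here the tameness hypothesis feeding into Proposition \ref{inj} is doing the essential work. Everything else is a standard application of Zorn's lemma together with the observation that a minimal nonzero subrepresentation is automatically simple.
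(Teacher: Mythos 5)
Your argument is correct, and it reaches the same conclusion as the paper via a slightly different (and in some sense dual) organization. The paper first passes to a finitely generated subrepresentation $N \subset M$, runs the standard Zorn argument to find a maximal proper subrepresentation of $N$ and hence an irreducible quotient $\pi$ of $N$, and then uses Proposition \ref{inj} to \emph{extend} the surjection $N \twoheadrightarrow \pi$ along the inclusion $N \hookrightarrow M$. You instead apply Zorn to the poset of subrepresentations of $M$ avoiding a fixed nonzero vector $v$ (this poset is trivially closed under unions of chains, with no finiteness hypothesis needed), obtain a quotient $\overline{M}$ with a unique minimal nonzero subrepresentation $P$, observe $P$ is simple, and then use Proposition \ref{inj} to \emph{retract} the inclusion $P \hookrightarrow \overline{M}$. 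Both proofs hinge on exactly the same two ingredients, Zorn's lemma to locate a simple subquotient and injectivity of irreducibles to promote it to a genuine quotient of $M$; the paper uses injectivity in the ``extend a map off a subobject'' direction while you use it in the ``split an injection'' direction. Your version avoids the (mild) intermediate step of choosing a finitely generated subrepresentation, at the cost of producing a quotient $\overline{M}$ with a slightly stronger structural property (a monolith) than is strictly needed. Either way the key insight, which you correctly flag, is that injectivity of irreducibles (hence tameness and the $\ell_c$ condition) is what makes the step from ``irreducible subquotient'' to ``irreducible quotient'' possible.
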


\begin{proof}

The ``only if" direction is trivial. For the other implication, it suffices to show that any nonzero $M \in \mathcal{R}(G)$ has an irreducible quotient. For this, choose a nonzero finitely generated subrepresentation $N \subset M$, whence an easy application of Zorn's lemma shows that $N$ has an irreducible quotient $\pi \in \widehat{G}$. Then use Proposition \ref{inj} to see that $\pi$ is in fact a quotient of $M$.

\end{proof}

\begin{corollary}

Let $f : M \to N$ be a morphism in $\mathcal{R}(G)$. Then $f = 0$ if and only if for each $\pi \in \widehat{G}$ and $g \in \Hom_G(N,\pi)$, we have $g \circ f = 0$.

\label{zeromap}

\end{corollary}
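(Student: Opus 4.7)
The plan is to prove the nontrivial direction by contrapositive, and the engine is the injectivity of irreducible representations (Proposition \ref{inj}) combined with the fact that every nonzero smooth representation has nonempty support (Lemma \ref{emptysupp}).

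The ``only if" direction is immediate: if $f = 0$, then any composite $g \circ f$ vanishes. For the ``if" direction, I would argue contrapositively: suppose $f \neq 0$, so its image $I = \im f \subset N$ is a nonzero subrepresentation. By Lemma \ref{emptysupp}, $\Supp I \neq \varnothing$, so I can choose some $\pi \in \widehat{G}$ together with a nonzero $G$-morphism $h : I \to \pi$.

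The key step is to extend $h$ to all of $N$. Since $\pi$ is irreducible and $G$ is a tame $\ell_c$-group, Proposition \ref{inj} says $\pi$ is an injective object in $\mathcal{R}(G)$. Applying injectivity to the inclusion $I \hookrightarrow N$ produces a morphism $g : N \to \pi$ whose restriction to $I$ is $h$. Then $g \circ f : M \to \pi$ factors through the surjection $M \twoheadrightarrow I$ followed by $h$, and since $h$ is nonzero on $I = \im f$, the composite $g \circ f$ is nonzero. This contradicts the hypothesis and completes the proof.

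There is no real obstacle here; the only thing to double-check is the factorization $g \circ f = h \circ (f|^{I})$, which is just the definition of the image together with the choice of extension. The content of the corollary is really a packaging of the two preceding facts (nonempty support plus injectivity of irreducibles) into a useful criterion for vanishing of morphisms, which will presumably be used to reduce questions about maps in $\mathcal{R}(G)$ to questions about stalks, as foreshadowed in the lead-up to Lemma \ref{exact}.
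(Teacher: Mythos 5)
Your proof is correct and follows essentially the same route as the paper: both reduce to showing $\Supp \im f = \varnothing$, invoke Lemma \ref{emptysupp} to get a nonzero morphism $\im f \to \pi$, and then lift it to $N$ using the injectivity of $\pi$ (Proposition \ref{inj}). The only difference is presentational (you phrase it as a contrapositive), so nothing further to add.
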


\begin{proof}

Again we need only prove the ``if" direction, and by the lemma it is enough to show that $\Supp \im f = \varnothing$. But this is clear, since every morphism in $\Hom_G(\im f,\pi)$ lifts to $N$ by injectivity of $\pi$.

\end{proof}

Here we prove Theorem \ref{skyscraper}, which roughly says that smooth representations are ``pointwise semisimple."

\begin{proof}[Proof of Theorem \ref{skyscraper}]

The functor $V \mapsto V_G \otimes \pi$ is obviously fully faithful, and to see that it is essentially surjective it is enough to show that every $M \in \mathcal{R}(G)_{\pi}$ is a direct sum of copies of $\pi$. For this, observe that the proof of Lemma \ref{emptysupp} can be adapted to show that the natural map \[ M \longrightarrow \prod_{\Hom_G(M,\pi)} \pi \] is a monomorphism: apply Zorn's lemma to prove that the subrepresentation generated by any nonzero $m \in M$ has an irreducible quotient, and use Proposition \ref{inj} to see that this subquotient of $M$ is in fact a quotient, necessarily isomorphic to $\pi$. More concisely, every nonzero element of $M$ survives in an irreducible quotient.

Now to finish the proof, we claim that if $I$ is any set, the smooth part of the product $\prod_I \pi$ is a sum of copies of $\pi$. But in fact the (clearly injective) map given below is an isomorphism.
\begin{align*}
\bigg( \prod_I \C \bigg) \otimes \pi & \longrightarrow \bigg( \prod_I \pi \bigg)^{sm} \\ 
\bigg( \prod_{i \in I} a_i,v \bigg) & \mapsto \prod_{i \in I} a_iv
\end{align*}
To see why, choose $v_i \in \pi$ for each $i \in I$ so that $\prod v_i$ is smooth, i.e. lies in the smooth part of $\prod_I \pi$. Then $G$ has a compact open subgroup $K$ such that $\prod v_i$ is contained in the subspace $(\prod_I \pi)^K = \prod_I \pi^K$ of $K$-invariants. Now $\dim_{\C} \pi^K < \infty$ by the tameness assumption, so we can choose a finite spanning set $x_1,\cdots,x_m$ for $\pi^K$. Finally, write $v_i = \sum_j a_{ij}x_j$ and observe that $\sum_j (\prod_i a_{ij}) \otimes x_j$ is sent to $\prod v_i$.

\end{proof}

Next we prove Lemma \ref{point}.

\begin{proof}[Proof of Lemma \ref{point}]

Notice that $M_{\pi}$ is defined precisely so that the natural morphism \[ M_{\pi} \to \prod_{f \in \Hom_G(M_{\pi},\pi)} \pi \] is a monomorphism. But the smooth part of $\prod \pi$ is a direct sum of copies of $\pi$ by the proof of Theorem \ref{skyscraper}.

\end{proof}

Recall that exactness of sequences in $\mathcal{R}(G)$ can be tested on stalks: let us prove Lemma \ref{exact}.

\begin{proof}[Proof of Lemma \ref{exact}]

To see that the functor $M \mapsto M(\pi)$ is exact, observe that there is a sequence of natural isomorphisms \[ \Hom_G(M,\pi) \cong \Hom_G(M_{\pi},\pi) \cong \Hom_G(M(\pi) \otimes \pi,\pi) \cong M(\pi)^*, \] where the first isomorphism uses the above-mentioned adjunction and the latter two use Proposition \ref{schur}. But the functor $M \mapsto \Hom_G(M,\pi)$ is exact by injectivity of $\pi$, which means that the the dual sequence \[ P(\pi)^* \longrightarrow N(\pi)^* \longrightarrow M(\pi)^* \] is exact, and consequently so is the sequence \[ M(\pi) \longrightarrow N(\pi) \longrightarrow P(\pi). \]

For the converse, we first prove that $g \circ f = 0$. Fix $\pi \in \widehat{G}$ and $h \in \Hom_G(P,\pi)$, so by Corollary \ref{zeromap} it suffices to show that $h \circ g \circ f = 0$. Well, since $(g \circ f)_{(\pi)} = g_{(\pi)} \circ f_{(\pi)} = 0$ it is easy to see that the induced map $M_{\pi} \to P_{\pi}$ is zero, and then we have what we need because $h$ factors through the natural projection $P \to P_{\pi}$. A similar sort of argument, which we leave to the reader, shows that $\Supp \ker g / \im f = \varnothing$ and consequently Lemma \ref{emptysupp} gives us exactness.

\end{proof}

\section{The proof of Theorem \ref{type} and a useful lemma}

\label{heckeproofs}

Let us review induction functors and their adjunction relations. Fix an $\ell$-group $G$ and a closed subgroup $H \subset G$. Then the functor $\Res_H^G : \mathcal{R}(G) \to \mathcal{R}(H)$ which restricts representations to $H$ has a right adjoint described as follows. Choose $(\pi,V) \in \mathcal{R}(H)$, and let \[ I(V) = \{ f : G \to V \ | \ f(hg) = \pi(h)(f(g)) \ \text{for all} \ h \in H,g \in G \} \] with a $G$-action defined by $(g \cdot f)(x) = f(xg)$. Then the induced representation $\Ind_H^G \pi$ is defined to be the smooth part of $I(V)$, and after extending to morphisms in the obvious way we call the resulting functor $\Ind_H^G : \mathcal{R}(H) \to \mathcal{R}(G)$ \emph{smooth induction}. Also, we define $\cInd_H^G \pi$ to be the subrepresentation of $\Ind_H^G \pi$ consisting of those functions with compact support modulo $H$, and call $\cInd_H^G : \mathcal{R}(H) \to \mathcal{R}(G)$ \emph{smooth induction with compact supports}, a subfunctor of $\Ind_H^G$. In case $H$ is open, $\cInd_H^G$ is left adjoint to $\Res_H^G$. 

\begin{proof}[Proof of Theorem \ref{type}]

By Proposition 3.3 in \cite{BK}, it suffices to prove that $\mathcal{R}_{\rho}(G)$ is closed under taking subquotients. But in fact this full subcategory of $\mathcal{R}(G)$ coincides with $\mathcal{R}(G)_{\mathscr{V}(G,\rho)}$, which is clearly closed under taking subquotients.

To see why, suppose $M \in \mathcal{R}_{\rho}(G)$ and consider the inclusion $e_{\rho} \cdot M \to M|_{K}$ as a $K$-morphism. Then there is a corresponding $G$-morphism $\cInd_K^G e_{\rho} \cdot M \to M$, which is surjective because $M$ is generated by $e_{\rho} \cdot M$, so that $\Supp M \subset \Supp (\cInd_K^G e_{\rho} \cdot M)$. But clearly $\cInd_K^G e_{\rho} \cdot M$ is supported on $\mathscr{V}(K,\rho)$, because for any $\pi \in \widehat{G}$ we have \[ \Hom_G(\cInd_K^G e_{\rho} \cdot M,\pi) \cong \Hom_K(e_{\rho} \cdot M,\pi|_K) \] and if $\pi \notin \mathscr{V}(K,\rho)$ then the latter space is zero.

Conversely, let $M \in \mathcal{R}(G)$ be supported on $\mathscr{V}(K,\rho)$, and write $N = \mathcal{H}(G) * e_{\rho} \cdot M$ for the $G$-subrepresentation of $M$ generated by the $\rho$-isotypic component $e_{\rho} \cdot M$. Obviously $\Supp M/N \subset \Supp M \subset \mathscr{V}(K,\rho)$, and also $e_{\rho} \cdot M \subset N$ implies $\Supp(M/N) \cap \mathscr{V}(K,\rho) = \varnothing$. Thus $\Supp M/N = \varnothing$, so by Lemma \ref{emptysupp} we have $M = N$ as desired.

\end{proof}

Here we include a technical lemma, which will in fact be used in all the proofs that follow. But first it is useful to introduce an auxiliary category, which we denote by $\check{G}$, in order to clean up the statement of the lemma. An object of $\check{G}$ is a pair $(K,\rho)$ consisting of a compact open subgroup $K \subset G$ and an irreducible smooth representation $\rho$ of $K$, and there are no morphisms $(L,\sigma) \to (K,\rho)$ unless $K \subset L$. If $K \subset L$, then a morphism $(L,\sigma) \to (K,\rho)$ in $\check{G}$ is a nonzero (hence surjective) $K$-morphism $\sigma|_K \to \rho$. As the notation suggests, the category $\check{G}$ is closely related to the topological space $\widehat{G}$.

Now each $M \in \mathcal{R}(G)$ gives rise to a presheaf $\mathscr{M}$ on $\check{G}$ with values in complex vector spaces. Define \[ \mathscr{M}(K,\rho) = \Hom_K(\rho,M|_K), \] and we obtain the ``restriction" map associated with a morphism $(L,\sigma) \to (K,\rho)$ as follows. There is a canonical isomorphism \[ \mathscr{M}(K,\rho) = \Hom_K(\rho,M|_K)  \to \Hom_L(\cInd_K^L \rho,M|_L), \] and the chosen $K$-morphism $\sigma|_K \to \rho$ corresponds to an $L$-morphism \[ \sigma \to \Ind_K^L \rho = \cInd_K^L \rho, \] which induces a pullback map \[ \Hom_L(\cInd_K^L \rho,M|_L) \to \Hom_L(\sigma,M|_L) = \mathscr{M}(L,\sigma). \] It is not so hard to see that these ``restrictions" are surjective. Now we state and prove the aforementioned lemma, the proof of which uses some ideas from \cite{GK}.

\begin{lemma}

Let $M \in \mathcal{R}(G)$ and suppose $\pi \in \widehat{G}$ but $\pi \notin \Supp M$. Then if $(K,\rho) \in \widehat{G}$ with $\pi \in \mathscr{V}(K,\rho)$, for any $\alpha \in \mathscr{M}(K,\rho)$ there exists $(L,\sigma) \in \check{G}$ and a morphism $(L,\sigma) \to (K,\rho)$ such that $\pi \in \mathscr{V}(L,\sigma)$ and $\alpha$ is annihilated by the induced map $\mathscr{M}(K,\rho) \to \mathscr{M}(L,\sigma)$. If $(K,\rho)$ is chosen so that $| \rho : \pi|_K | = 1$, then $(L,\sigma)$ can be chosen so that $| \sigma : \pi|_L | = 1$ and the morphism $(L,\sigma) \to (K,\rho)$ is unique modulo nonzero scalars.

\label{shrink}

\end{lemma}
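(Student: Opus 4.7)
The plan is to reduce the claim to the uniqueness case $|\rho:\pi|_K|=1$ and to argue there by contradiction via a filtered-union gluing that manufactures a nonzero $G$-morphism $\pi \to M$. First, let $\alpha':\cInd_K^G\rho \to M$ be the Frobenius-adjoint of $\alpha$ and set $N = \im\alpha'$; injectivity of $\pi$ in $\mathcal{R}(G)$ (Proposition~\ref{inj}) combined with $\pi \notin \Supp M$ forces $\pi \notin \Supp N$, and since the restriction maps are natural in $M$ we may replace $M$ with $N$, so we may assume $M$ is $G$-cyclically generated by $\alpha(\rho)$. For the reduction to the uniqueness case in general, apply a mild strengthening of Lemma~\ref{lemma1} (the same proof, with the ambient compact open in its conclusion enlarged to contain the prescribed $K$) to get $K' \supset K$ for which the $K'$-subrepresentation $\rho' \subset \pi|_{K'}$ generated by a fixed copy of $\rho$ is irreducible; the Hecke-algebra simplicity argument of the lemma immediately preceding Lemma~\ref{shrink} then yields $|\rho':\pi|_{K'}|=1$, and $\rho \subset \rho'|_K$ furnishes a morphism $(K',\rho') \to (K,\rho)$. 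If $\alpha$ restricts to zero in $\mathscr{M}(K',\rho')$ we take $(L,\sigma)=(K',\rho')$; otherwise we apply the uniqueness case to $(K',\rho')$ and compose.

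Under the uniqueness hypothesis, for each compact open $L \supset K$ the semisimplicity of $\pi|_L$ (tameness) picks out a unique $L$-irreducible summand $\sigma_L \subset \pi|_L$ whose $K$-restriction contains $\rho$; this $\sigma_L$ satisfies $|\sigma_L:\pi|_L|=1$, $|\rho:\sigma_L|_K|=1$, the $K$-surjection $\phi_L:\sigma_L|_K \to \rho$ is unique up to scalar, and $\sigma_{L'} \subset \sigma_L|_{L'}$ whenever $L' \subset L$ both contain $K$. Suppose for contradiction that $\beta_L := \hat\alpha_L \circ \tilde\phi_L : \sigma_L \to M|_L$ is nonzero for every such $L$; each $\beta_L$ is then injective by $L$-irreducibility of $\sigma_L$. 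Using transitivity $\cInd_K^G\rho = \cInd_L^G(\Ind_K^L\rho)$, naturality of the Frobenius adjunctions, and the Mackey/double-coset decomposition of $\Ind_K^L\rho|_{L'}$ (whose trivial double-coset summand is $\Ind_K^{L'}\rho$), one shows that $\beta_L|_{\sigma_{L'}}$ and $\beta_{L'}$ agree up to a nonzero scalar. Normalizing the $\phi_L$'s coherently along a cofinal chain sets these scalars to $1$, so the $\beta_L$'s assemble into a $G$-equivariant map on $\bigcup_L\sigma_L$; by $\ell_c$ this union is $G$-stable, hence equals $\pi$ by irreducibility. The assembled map is nonzero because it restricts to $\alpha$ on $\sigma_K=\rho$, giving an embedding $\pi \hookrightarrow M$; injectivity of $\pi$ splits this inclusion, yielding a nonzero $M \to \pi$ that contradicts $\pi \notin \Supp M$.

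The main obstacle I anticipate is the compatibility claim $\beta_L|_{\sigma_{L'}} \propto \beta_{L'}$: one must analyze how the Frobenius image $\tilde\phi_L$ sends $\sigma_{L'} \subset \sigma_L$ into $\Ind_K^L\rho|_{L'}$, showing that the contributions from non-trivial double cosets in $L' \backslash L / K$ vanish after applying $\hat\alpha_L$. The hope is that the uniqueness of $\rho$ in $\pi|_K$ forces the $L'$-irreducibles appearing in $\sigma_L|_{L'}$ other than $\sigma_{L'}$ to have $K$-restrictions disjoint from $\rho$, so that $\phi_L$ annihilates them and only the trivial-coset piece survives. Completing this analysis carefully, together with the coherent scalar normalization of the $\phi_L$'s along a cofinal chain, is the technical heart of the argument; both tameness and the $\ell_c$ hypothesis play essential roles.
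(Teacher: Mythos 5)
Your overall strategy is genuinely different from the paper's: where the paper argues purely algebraically inside the Hecke algebra $A = \mathcal{H}(G,\rho)$ (observing $\Hom_A(A\alpha,\widetilde\pi)=0$ via injectivity of $\widetilde\pi$, producing $x \in \Ann_A(\widetilde\pi)$, $y \in \Ann_A(\alpha)$ with $x+y=1$, and using the filtered-union structure $A = \bigcup_L \mathcal{H}(L,\rho)$ to trap $x,y$ in a single $\mathcal{H}(L,\rho)$), you try to glue the local morphisms $\beta_L : \sigma_L \to M|_L$ into a global $G$-morphism $\pi \to M$ and derive a contradiction. Your reduction to the multiplicity-one case is sound (it is essentially the argument of the unnamed lemma following Lemma~\ref{lemma1}: if $\rho_0 \subset \pi^N$ for some small $N$, then the $K'$-subrepresentation $\rho'$ generated by $\rho_0$ satisfies $\rho'^N = \pi^N$, which forces $|\rho':\pi|_{K'}|=1$). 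The final injectivity step converting $\pi \hookrightarrow M$ into a nonzero $M \to \pi$ is also fine.

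The genuine gap is the compatibility claim $\beta_L|_{\sigma_{L'}} \propto \beta_{L'}$, which you yourself flag as the main obstacle. The difficulty is not just "technical": the two $L'$-morphisms $\sigma_{L'} \to (\cInd_K^L\rho)|_{L'}$ you need to compare --- namely $\tilde\phi_{L'}$ followed by the trivial-coset inclusion $\cInd_K^{L'}\rho \hookrightarrow (\cInd_K^L\rho)|_{L'}$, versus $\tilde\phi_L$ restricted to the subspace $\sigma_{L'} \subset \sigma_L|_{L'}$ --- live in $\Hom_{L'}(\sigma_{L'},(\cInd_K^L\rho)|_{L'})$, and by Mackey this space is $\bigoplus_{g \in L'\backslash L/K}\Hom_{L'\cap gKg^{-1}}\bigl(\sigma_{L'}|_{L'\cap gKg^{-1}},{}^g\rho\bigr)$, which is typically multi-dimensional. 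Your "hope" that multiplicity one kills the non-trivial double-coset contributions does not follow: what multiplicity one gives you is that $\phi_L$ kills the $L'$-summands of $\sigma_L|_{L'}$ other than $\sigma_{L'}$, but for $v \in \sigma_{L'}$ and $g \in L\setminus L'$ the vector $g\cdot v$ need not lie in such a summand --- its $\sigma_{L'}$-component can still have nonzero $\rho_0$-projection, so $\tilde\phi_L(v)(g) = \phi_L(g\cdot v)$ can be nonzero. The map $v \mapsto \phi_L(g\cdot v)$ is not $K$-equivariant (conjugation by $g$ moves $K$), so you cannot kill it by an isotypic argument. In short, the assembled map would be a section of the presheaf $\mathscr{M}$ over a "large" set, but the presheaf restrictions in $\check{G}$ run in the wrong direction to give you that section for free; this is precisely the difficulty the paper's ideal-theoretic argument sidesteps by never attempting to build a morphism out of $\pi$, instead pinning down a single good $L$ inside the filtered union of Hecke subalgebras.
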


\begin{proof}

Let us write $A = \mathcal{H}(G,\rho)$, so $\widetilde{M} = \mathscr{M}(K,\rho)$ is naturally an $A$-module, as is $\widetilde{\pi} = \Hom_K(\rho,\pi|_K)$, and the latter is finite-dimensional and simple (in particular, nonzero). In view of Theorem \ref{type} it is clear that $\Hom_A(\widetilde{M},\widetilde{\pi}) = 0$, and in particular $\Hom_A(A \cdot \alpha,\widetilde{\pi}) = 0$ since $\widetilde{\pi}$ is injective in $A - \mathbf{mod}$. If we put $I = \Ann_A(\widetilde{\pi})$ and $J = \Ann_A(\alpha)$, this means that $I + J = A$, and in particular we can find $x \in I$ and $y \in J$ such that $x + y = 1$. Now by the $\ell_c$ condition $A$ is a filtered union of subalgebras $\mathcal{H}(L,\rho)$, where $L$ is a compact open subgroup containing $K$, whence it is possible to find $L \supset K$ large enough that $x,y \in \mathcal{H}(L,\rho)$ and $\widetilde{\pi}$ is a simple $\mathcal{H}(L,\rho)$-module. Since the space \[ \Hom_L(\cInd_K^L \rho,\pi|_L) = \Hom_K(\rho,\pi|_K) \] is nonzero by assumption, there is some $\sigma \in \widehat{L}$ such that $| \sigma : \cInd_K^L \rho |, | \sigma : \pi|_L | > 0$. This means we can also find some nonzero $L$-morphism $\sigma \to \cInd_K^L \rho$, which corresponds to the needed $K$-morphism $\sigma|_K \to \rho$. If we chose $\rho$ so that $| \rho : \pi|_K | = 1$, then now we have a unique choice of $\sigma \in \widehat{L}$ with $| \sigma : \cInd_K^L \rho |, | \sigma : \pi|_L | = 1$, and the morphism $\sigma|_K \to \rho$ is also unique modulo nonzero scalars.

Now we show that the morphism $(L,\sigma) \to (K,\rho)$ we have found has the property that $\alpha$ is annihilated by the ``restriction" $\mathscr{M}(K,\rho) \to \mathscr{M}(L,\sigma)$. Let us write $B = \mathcal{H}(L,\rho)$ and observe that $I \cap B + J \cap B = B$ (because $x+y=1$), which says that $\Hom_B(B \cdot \alpha, \widetilde{\pi}) = 0$. To say that $\alpha$ vanishes in the restriction is to say that the composite $L$-morphism \[ \sigma \to \cInd_K^L \rho \to M|_L \] is zero, so let us assume otherwise and find a contradiction. In that case we can certainly find an $L$-morphism $\varphi : M|_L \to \pi|_L$ such that the composition \[ \sigma \to \cInd_K^L \rho \to M|_L \to \pi|_L \] is nonzero. But then this induces a $B$-morphism $\varphi_* : \widetilde{M} \to \widetilde{\pi}$ with $\varphi_*(\alpha) \neq 0$, and we just saw that $\Hom_B(B \cdot \alpha, \widetilde{\pi}) = 0$.

\end{proof}

\begin{corollary}

In the situation of the lemma, if we assume also that $M$ is admissible, it is possible to find $(L,\sigma) \to (K,\rho)$ so that $\mathscr{M}(L,\sigma) = 0$.

\end{corollary}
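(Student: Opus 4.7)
The plan is to bootstrap Lemma \ref{shrink}, which only kills a single element $\alpha \in \mathscr{M}(K,\rho)$, into the stronger statement that kills all of $\mathscr{M}(K,\rho)$. The crucial new input supplied by admissibility is that $\mathscr{M}(K,\rho)$ is finite-dimensional, which reduces the task to annihilating a basis.

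To see the finite-dimensionality, let $K' = \ker \rho$. Since $\rho$ is a smooth finite-dimensional representation of the compact group $K$, the subgroup $K'$ is open, normal, and of finite index in $K$; in particular it is a compact open subgroup of $G$. Any $K$-equivariant map $\rho \to M|_K$ takes values in $M^{K'}$, because $\rho$ is trivial on $K'$, so $\Hom_K(\rho, M|_K) \subseteq \Hom_{\C}(\rho, M^{K'})$, and the latter is finite-dimensional by admissibility of $M$ together with finite-dimensionality of $\rho$.

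Now choose a basis $\alpha_1, \ldots, \alpha_n$ of $\mathscr{M}(K,\rho)$. By induction on $i$ I produce a chain $(L_i,\sigma_i) \to (L_{i-1},\sigma_{i-1}) \to \cdots \to (L_1,\sigma_1) \to (K,\rho)$ in $\check{G}$ with $\pi \in \mathscr{V}(L_i,\sigma_i)$ and each of $\alpha_1, \ldots, \alpha_i$ annihilated by the composite restriction $\mathscr{M}(K,\rho) \to \mathscr{M}(L_i,\sigma_i)$. For $i=1$ this is exactly Lemma \ref{shrink} applied to $\alpha_1$. For the inductive step, let $\beta_{i+1}$ denote the image of $\alpha_{i+1}$ in $\mathscr{M}(L_i,\sigma_i)$: if $\beta_{i+1}=0$, take $(L_{i+1},\sigma_{i+1}) = (L_i,\sigma_i)$ with the identity morphism; otherwise apply Lemma \ref{shrink} with $(L_i,\sigma_i)$ in place of $(K,\rho)$ and $\beta_{i+1}$ in place of $\alpha$ to produce $(L_{i+1},\sigma_{i+1}) \to (L_i,\sigma_i)$ killing $\beta_{i+1}$. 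In either case the previously killed classes $\alpha_1,\ldots,\alpha_i$ remain annihilated because presheaf restrictions compose. After $n$ steps, the composite $\mathscr{M}(K,\rho) \to \mathscr{M}(L_n,\sigma_n)$ vanishes on a basis of its domain, hence is identically zero; since this restriction is surjective (as noted just before Lemma \ref{shrink}), we conclude $\mathscr{M}(L_n,\sigma_n)=0$, and $(L_n,\sigma_n) \to (K,\rho)$ is the desired morphism. I do not expect any substantive obstacle here: the iteration merely packages Lemma \ref{shrink}, whose hypotheses $\pi \notin \Supp M$ and $\pi \in \mathscr{V}(L_i,\sigma_i)$ are preserved at every stage, and the only genuinely new ingredient is the admissibility-driven finiteness.
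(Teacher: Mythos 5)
Your proof is correct and follows essentially the same strategy as the paper's very terse argument: admissibility gives $\dim_{\C}\mathscr{M}(K,\rho) < \infty$, and one then iterates Lemma \ref{shrink}, using surjectivity of the restriction maps, until the space is exhausted. You spell out the finiteness (via $\ker\rho$) and organize the iteration around a fixed basis, whereas the paper leaves the iteration implicit (one can equivalently phrase it as a strict dimension drop at each step); the two packagings are interchangeable.
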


\begin{proof}

If $M$ is admissible then $\dim_{\C} \mathscr{M}(K,\rho) < \infty$, so the corollary follows from the lemma in view of the fact that the ``restriction" maps $\mathscr{M}(K,\rho) \to \mathscr{M}(L,\sigma)$ are surjective.

\end{proof}

\section{The proof of Theorem \ref{fell2}}

\label{adjointproofs}

Next we will prove Theorem \ref{fell2}, but first let us recall some notation. Let $Z \subset W \subset \widehat{G}$ be any subsets and $U = W \setminus Z$, and write $i_! : \mathcal{R}(G)_Z \to \mathcal{R}(G)_W$ and $j_! : \mathcal{R}(G)_U \to \mathcal{R}(G)_W$ for the inclusions. Now $j_!$ always has a right adjoint $j^!$ for formal reasons, given by the formula \[ j^!M = \bigcap_{\substack{f \in \Hom_G(M,\pi) \\ \pi \in Z}} \ker f \] on objects $M \in \mathcal{R}(G)_W$. It is easy to see that this construction is functorial. We can define $i^*M = M/j^!M$ or more formally $M/j_!j^!M$, and it is clear enough that if $i_!$ has a left adjoint it must be given by this formula. The subtle issue is whether $i^*M$ is always supported on $Z$, and the theorem says that as long as $W$ is locally closed, this happens precisely when $Z$ is closed in $W$.

\begin{proof}[Proof of Theorem \ref{fell2}]

First suppose that $Z$ is closed in $W$ and $M \in \mathcal{R}(G)_W$. To see that $\Supp i^*M \subset Z$, fix $\pi \in \Supp i^*M$ and $(K,\rho) \in \check{G}$ such that $\pi \in \mathscr{V}(K,\rho)$, so it suffices to prove that $Z \cap \mathscr{V}(K,\rho) \neq \varnothing$ because then $\pi \in \overline{Z} = Z$. These assumptions give us nonzero (hence surjective) morphisms $i^*M \to \pi$ and $\pi|_K \to \rho$, which we can compose to get a nonzero morphism $i^*M|_K \to \pi|_K \to \rho$. Then by semisimplicity of $\mathcal{R}(K)$ we can find a nonzero morphism $\rho \to M|_K$ which does not land in $j^!M$, so by our definition of the latter there exists $\zeta \in Z$ and a morphism $M \to \zeta$ such that the composition $\rho \to M|_K \to \zeta|_K$ is nonzero. That is, $\zeta \in Z \cap \mathscr{V}(K,\rho)$.

The rest is more formal. For all $M \in \mathcal{R}(G)_W$ and $N \in \mathcal{R}(G)_Z$ we have a pullback map \[ \Hom_G(i^*M,N) \to \Hom_G(M,N), \] which we need to show is an isomorphism (the naturality requirement is clear). This map is injective because $M \to i^*M$ is surjective, so for the surjectivity fix a morphism $M \to N$. By Corollary \ref{zeromap}, to see that this morphism descends to the quotient it is enough to check that for any $\pi \in W$, the composition \[ j^!M \to M \to N \to \pi \] is zero. If $\pi \notin Z$, then this is just because $\Supp N \subset Z$, and if $\pi \in Z$ then use the definition of $j^!M$.

For the other implication, we can immediately reduce to the case that $W$ is open in $\widehat{G}$: let us assume for the moment that we have proved the theorem in this situation. Find open $V \supset W$ such that $W$ is closed in $V$, so by the first implication the inclusion $\mathcal{R}(G)_W \to \mathcal{R}(G)_V$ has a left adjoint. But then if $\mathcal{R}(G)_Z \to \mathcal{R}(G)_W$ has a left adjoint we can compose the two to get a left adjoint for $\mathcal{R}(G)_Z \to \mathcal{R}(G)_V$, and the special case implies that $Z$ is closed in $V$, hence closed in $W$.

So assume $W$ is open. If $i_!$ has a left adjoint, it is easy to check that it must be isomorphic to $i^*$, and in particular $i^*M$ is supported on $Z$ for every $M \in \mathcal{R}(G)_W$. Take $M \subset \mathcal{H}(G)$ to be the ``restriction" of the left regular representation to $W$: explicitly, if $k : W \to \widehat{G}$ is the open inclusion then $M = k^!\mathcal{H}(G)$, where $\mathcal{H}(G)$ is given the $G$-action by translations. Clearly $\Supp M = W$, so it makes sense to consider $i^*M$, and with our assumption we get $\Supp i^*M = Z$. In particular, if $\pi \in W \setminus Z$ then $\Hom_G(M,\pi) = 0$.

Find $(K,\rho) \in \check{G}$ such that $\pi \in \mathscr{V}(K,\rho) \subset W$, and as before put $A = \mathcal{H}(K,\rho)$. If we restrict $M$ further to $\mathscr{V}(K,\rho)$, then, under the equivalence of Theorem \ref{type}, $M$ corresponds to $\mathscr{M}(K,\rho) = A$ viewed as an $A$-module. Thus we can apply Lemma \ref{shrink} to find $(L,\sigma) \in \check{G}$ and a morphism $(L,\sigma) \to (K,\rho)$ such that $\pi \in \mathscr{V}(L,\sigma) \subset \mathscr{V}(K,\rho)$ and $1 \in A$ is annihilated by the restriction map $\mathscr{M}(K,\rho) \to \mathscr{M}(L,\sigma)$. But then this $A$-map is zero, and we know it to be surjective also, whence $\mathscr{M}(L,\sigma) = 0$ or equivalently $\mathscr{V}(L,\sigma) \cap Z = \varnothing$ and $Z$ is closed as desired.

Now for the exactness, notice that when $i^*$ exists it is right exact simply because it is a left adjoint, so it suffices to show that $i^*$ preserves monomorphisms. Let $f : M \to N$ be an injective morphism in $\mathcal{R}(G)_W$, write $P \subset i^*M$ for the kernel of $i^*f$, and fix a morphism $g : P \to \pi$ for some $\pi \in W$. Then by injectivity we can lift $g$ to $i^*M \to \pi$, so if $\pi \notin Z$ then already this morphism is zero. Otherwise this naturally induces a morphism $M \to \pi$, and then use injectivity again to lift to $N \to \pi$. Since $\pi \in Z$ this descends to $\widetilde{g} : i^*N \to \pi$, whence $g(P) = \widetilde{g}(i^*f(P)) = 0$ and $P = 0$ by Lemma \ref{emptysupp}.

\end{proof}

We include a proof of Corollary \ref{projective} here.

\begin{proof}[Proof of Corollary \ref{projective}]

To see that an open point $\pi \in \widehat{G}$ is projective, write $i : \{ \pi \} \to \widehat{G}$ for the inclusion and observe that for any $M \in \mathcal{R}(G)$, the composition \[ i^!M \to M \to i^*M \] is an isomorphism because $\pi$ is open and closed. Notice $i^*M = M_{\pi}$ with the notation from section \ref{stalkstatements}, so \[ \Hom_G(\pi,M) = \Hom_G(\pi,i^!M) = \Hom_G(\pi,M_{\pi}) = M(\pi) \] and $M \to \Hom_G(\pi,M)$ is just the functor of taking stalks at $\pi$. But this functor is exact by Lemma \ref{exact}.

Now for the converse, in view of Theorem \ref{fell2} it suffices to prove that if $j : \widehat{G} \setminus \{ \pi \} \to \widehat{G}$, the inclusion $j_! : \mathcal{R}(G)_{\widehat{G} \setminus \{ \pi \}} \to \widehat{G}$ has a left adjoint. Moreover, by the proof of the theorem it is enough to check that $j^*M$ is supported on $\widehat{G} \setminus \{ \pi \}$ for all $M \in \mathcal{R}(G)$. Consider the short exact sequence \[ 0 \to \Hom_G(\pi,i^!M) \to \Hom_G(\pi,M) \to \Hom_G(\pi,j^*M) \to 0 \] (so here we are using the projectivity of $\pi$). But $\Hom_G(\pi,i^!M) \to \Hom_G(\pi,M)$ is obviously an isomorphism, which forces $\Hom_G(\pi,j^*M) = 0$. Fix a morphism $j^*M \to \pi$, which induces a map \[ \Hom_G(\pi,j^*M) \to \Hom_G(\pi,\pi) \cong \C, \] and this is obviously not surjective since the first space is zero. So $j^*M \to \pi$ is not surjective, hence zero, and $\Supp j^*M \subset \widehat{G} \setminus \{ \pi \}$ as desired.

\end{proof}

\section{The proof of Theorem \ref{fell3}}

Now we are ready to prove Theorem \ref{fell3}, which we will deduce from Theorem \ref{fell2}.

\begin{proof}[Proof of Theorem \ref{fell3}]

We start by proving that the bijection $\mathscr{V}(K,\rho) \to \widehat{A}$ is closed. Suppose $Z \subset \mathscr{V}(K,\rho)$ is closed, so to prove that its image in $\widehat{A}$ is closed we must show that if $I \in \widehat{A}$ contains \[ J_Z = \bigcap_{\zeta \in Z} \Ann_{A}(\widetilde{\zeta}) \] (here again we write $\widetilde{\zeta} = \Hom_K(\rho,\zeta|_K)$), then $\pi \in Z$ where $I = \Ann_{A}(\widetilde{\pi})$. If we take $W = \mathscr{V}(K,\rho)$ in the setup of Theorem \ref{fell2} and let $M \in \mathcal{R}(G)_W$ be such that $\widetilde{M} = A$ as an $A$-module, then $\widetilde{i^*M} = A/J_Z$ and in particular $A/J_Z$ is supported on the image of $Z$ in $\widehat{A}$, in the sense that $\Hom_A(A/J_Z,\widetilde{\zeta}) = 0$ for any $\zeta \in W \setminus Z$. The assumption $I \supset J_Z$ says that there is a canonical surjection $A/J_Z \to \widetilde{\pi}$, whence $\pi \in Z$.

Conversely, suppose $Z \subset \widehat{A}$ is closed in the Jacobson topology and let $J_Z$ be as before. We want to use Theorem \ref{fell2} again, so $W = \mathscr{V}(K,\rho)$ still, and it is enough to show that the inclusion $\mathcal{R}(G)_Z \to \mathcal{R}(G)_W$ has a left adjoint. Recall from the proof of Theorem \ref{fell2} that actually it suffices just to check that $i^*M$ is supported on $Z$ for all $M \in \mathcal{R}(G)_W$, so for this observe that $\widetilde{i^*M} = \widetilde{M}/J_Z\widetilde{M}$. But since $Z$ is closed in $\widehat{A}$, we have $\Hom_A(A/J_Z,\widetilde{\zeta}) = 0$ for $\zeta \in W \setminus Z$, which implies that $\widetilde{M}/J_Z\widetilde{M}$ and hence $i^*M$ is supported on $Z$.

\end{proof}

\section{The proof of Theorem \ref{bernstein}}

Finally, we come to Theorem \ref{bernstein}, which describes the Bernstein center of $\mathcal{R}(G)$. We will need the exactness of induction, which is proved, for example, in \cite{BZ}.

\begin{proposition}

For any $\ell$-group $G$ and closed subgroup $H$, the functors $\cInd_H^G,\Ind_H^G : \mathcal{R}(H) \to \mathcal{R}(G)$ are exact.

\end{proposition}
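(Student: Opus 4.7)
The plan is to verify exactness directly from the concrete description of induced representations as spaces of equivariant functions $f : G \to V$. Both functors are easily seen to be left exact: injectivity of $V_1 \to V_2$ gives injectivity on function spaces, and the kernel of the map induced by $V_2 \to V_3$ consists precisely of functions $f$ with $f(g) \in V_1$ for all $g \in G$. The substantive content is to show that $\Ind_H^G$ and $\cInd_H^G$ preserve surjections.

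Given a surjection $p : V_2 \to V_3$ in $\mathcal{R}(H)$ and $f \in \Ind_H^G V_3$, I would construct a lift $\tilde f : G \to V_2$ which is smooth, $H$-equivariant, and satisfies $p \circ \tilde f = f$. First choose a compact open subgroup $K \subset G$ such that $f$ is right $K$-invariant, so that $f$ descends to a function on the discrete set $H \backslash G / K$. For each double coset $HgK$, the value $f(g) \in V_3$ is invariant under the compact open subgroup $L_g := H \cap gKg^{-1}$ of $H$ (this is the $H$-stabilizer of the coset $gK$). The key input is that for any compact $\ell$-group $L$, the functor $V \mapsto V^L$ is exact on $\mathcal{R}(L)$ because $\mathcal{R}(L)$ is semisimple; applied to $L_g$ this yields a surjection $V_2^{L_g} \to V_3^{L_g}$, so one may pick $v_g \in V_2^{L_g}$ with $p(v_g) = f(g)$.

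Then define $\tilde f(hgk) = p_{V_2}(h)(v_g)$ on each double coset $HgK$, where $p_{V_2}$ denotes the $H$-action on $V_2$. The $L_g$-invariance of $v_g$ makes this well defined: if $h_1 g k_1 = h_2 g k_2$ then $h_2^{-1} h_1 \in L_g$, hence $p_{V_2}(h_1)(v_g) = p_{V_2}(h_2)(v_g)$. By construction $\tilde f$ is $H$-equivariant on the left, right $K$-invariant (hence smooth), and satisfies $p \circ \tilde f = f$, proving exactness of $\Ind_H^G$.

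For the compact-induction case, note again that $H \backslash G / K$ is discrete, so if $f \in \cInd_H^G V_3$ has compact support modulo $H$, then its support in $H \backslash G / K$ is compact and discrete, hence finite. Choosing $v_g = 0$ off this finite set yields a $\tilde f$ supported on finitely many cosets $HgK$, so it has compact support modulo $H$ and lies in $\cInd_H^G V_2$. The main potential obstacle is just the well-definedness bookkeeping on double cosets; beyond that, the whole argument simply transports semisimplicity of compact $\ell$-group representations up to the induced representation.
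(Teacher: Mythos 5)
Your argument is correct. Note that the paper does not actually prove this proposition: it is stated as a known result and outsourced to Bernstein--Zelevinsky \cite{BZ}, so any proof you give is by definition a different route. What you have written is essentially the standard Mackey-style argument, and it is complete: left exactness is indeed immediate from the description of $I(V)$ as equivariant functions, and your lifting construction for surjectivity is sound. The two key verifications both check out: the value $f(g)$ of a right $K$-invariant element of $I(V_3)$ is fixed by $L_g = H \cap gKg^{-1}$ (since $\pi_3(h)f(g) = f(hg) = f(gk) = f(g)$ for $h = gkg^{-1}$), and surjectivity of $V_2^{L_g} \to V_3^{L_g}$ follows from the averaging idempotent $e_{L_g}$ (equivalently, semisimplicity of $\mathcal{R}(L_g)$, which the paper records for compact $\ell$-groups); the well-definedness of $\tilde f(hgk) = \pi_2(h)(v_g)$ is exactly the $L_g$-invariance of $v_g$, and right $K$-invariance of $\tilde f$ gives smoothness. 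Your treatment of the compact-support case is also right: $H\backslash G/K$ is discrete, so compact support modulo $H$ means the lift involves only finitely many double cosets. What this buys over the citation is a self-contained proof using only facts already present in the paper (openness of compact open subgroups, semisimplicity for compact $\ell$-groups); the only cost is the double-coset bookkeeping, which you handle correctly. One cosmetic remark: you should say explicitly that you fix one representative $g$ per double coset before choosing $v_g$, since the subgroup $L_g$ and the chosen lift both depend on that representative, but this is a presentational point rather than a gap.
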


\begin{proof}[Proof of Theorem \ref{bernstein}]

First we show that the image of the homomorphism $\mathscr{Z}_W(G) \to\Fun(W)$ contains $C^{\infty}(W)$. Fix $f \in C^{\infty}(W)$ and decompose $W = \coprod_{i \in I} U_i$ into the level sets of $f$, which are open and closed in $W$. Given $M \in \mathcal{R}(G)_W$, let us write $M_{U_i}$ for the largest quotient of $M$ supported on $U_i$ obtained from Theorem \ref{fell2}, so writing $i_{U_i} : U_i \to W$ for the inclusions this means $M_{U_i} = i^*_{U_i}M$. In fact, since each $U_i$ is also open, the composition \[ i_{U_i}^!M \to M \to i_{U_i}^*M \] is an isomorphism and $M_{U_i}$ may be viewed as a subrepresentation in a canonical way. We claim that the natural morphism \[ \bigoplus_{i \in I} M_{U_i} \longrightarrow M \] is in fact an isomorphism, which we prove by using Lemma \ref{exact} to test the morphism on stalks. Fix $\pi \in W$, and recall from the proof of that lemma that $M(\pi)^* = \Hom_G(M,\pi)$, which is easier to work with. Therefore we need only observe that the induced map \[ \Hom_G(M,\pi) \longrightarrow \Hom_G \big( \bigoplus_{i \in I} M_{U_i},\pi \big) \cong \prod_{i \in I} \Hom_G(M_{U_i},\pi) \] is an isomorphism, so use Theorem \ref{fell2} and the fact that $\pi \in M_{U_i}$ for exactly one $i \in I$.

Now we define $\varphi_M \in \End_G(M)$ by letting $\varphi_M$ scale each summand $M_{U_i}$ by $f(U_i)$. To see that this is natural in $M$, choose a morphism $g : M \to N$ in $\mathcal{R}(G)$ and observe that by the proof of Theorem \ref{fell2}, $g$ sends each $M_{U_i}$ into $N_{U_i}$. Then the square we want to commute decomposes into a direct sum, and each summand commutes by $\C$-linearity of $g|_{M_{U_i}}$. Now since $\varphi_{\pi}$ is the scalar $f(\pi)$ it is easy to see that the homomorphism $\mathscr{Z}_W(G) \to \Fun(W)$ sends $\varphi$ to $f$ as needed.

As for the converse, choose $\theta \in \mathscr{Z}_W(G)$, which is a functorial family of endomorphisms $\theta_M : M \to M$ for all $M \in \mathcal{R}(G)_W$. Fix $\pi \in W$, so we need to find an open neighborhood of $\pi$ in $W$ such that $\theta$ acts on all points of this neighborhood by the same scalar. To this end we can assume without loss of generality that $\theta_{\pi} = 0$, and let us also choose $(K,\rho) \in \check{G}$ such that $| \rho : \pi|_K | = 1$ and $Z = \mathscr{V}(K,\rho) \cap W$ is closed in $\mathscr{V}(K,\rho)$. Let us write $i_! : \mathcal{R}(G)_Z \to \mathcal{R}_{\rho}(G)$ for the inclusion functor, which has an exact left adjoint $i^*$ by Theorem \ref{fell2}, and put $N = \cInd_K^G \rho \in \mathcal{R}_{\rho}(G)$. There is a unique (up to scaling) morphism $N \to \pi$, which factors through $i^*N$ because $\pi \in Z$, so denote by $M$ the kernel of the surjection $i^*N \to \pi$. Notice that $\theta_{\pi} = 0$ implies that $\theta_{i^*N}$ lands in $M$, and clearly $\Hom_G(M,\pi) = 0$. Thus we obtain a nonzero morphism $\alpha : \rho \to M|_K$ corresponding to the composition $N \to i^*N \to \pi$, so we can apply Lemma \ref{shrink} to find $(L,\sigma) \in \check{G}$ and a morphism $(L,\sigma) \to (K,\rho)$ such that $| \sigma : \pi|_K | = 1$, $\mathscr{V}(L,\sigma) \subset \mathscr{V}(K,\rho)$, and $\alpha$ is annihilated in the restriction $\mathscr{M}(K,\rho) \to \mathscr{M}(L,\sigma)$.

We claim that $\theta_{\zeta} = 0$ for any $\zeta \in \mathscr{V}(L,\sigma) \cap W$, so clearly it suffices to prove that for $P = \cInd_L^G \sigma$ we have $\theta_{i^*P} = 0$. To see the latter, consider the chosen injection $\sigma \to \cInd_K^L \rho$, which induces an injective $G$-morphism $\cInd_L^G \sigma \to \cInd_K^G \rho$ by exactness of $\cInd_L^G$, then look at the commutative square \[
\begin{diagram}
i^*P & \rTo^{\theta} & i^*P \\
\dTo &         & \dTo \\
i^*N  & \rTo^{\theta} & i^*N
\end{diagram}. \]
Then recall that $(L,\sigma) \to (K,\rho)$ was chosen to make the composition $i^*P \to i^*N \stackrel{\theta}{\to} i^*N$ zero, so from the injectivity of $i^*P \to i^*N$ the claim follows.

\end{proof}

\appendix

\section{The abelian case}

Here we discuss some details which relate to Theorem \ref{rodier}, and in particular give a proof which is different from Rodier's proof in \cite{Rod}. First we prove that all abelian $\ell_c$-groups are tame in a very strong sense.

\begin{lemma}

Any irreducible smooth representation of an abelian $\ell_c$-group is one-dimensional.

\end{lemma}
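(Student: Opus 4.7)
The plan is to reduce to an irreducible module over the group algebra of a torsion discrete abelian group, then exploit the fact that $\C$ already contains all roots of unity.

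First I would use smoothness of $\pi$ to choose a compact open subgroup $K \subset G$ with $V^K \neq 0$. Because $G$ is abelian, the subspace $V^K$ is automatically $G$-stable: for $v \in V^K$, $g \in G$, and $k \in K$, we have $\pi(k)\pi(g)v = \pi(g)\pi(k)v = \pi(g)v$. Irreducibility then forces $V = V^K$, so $K$ acts trivially on $V$ and $\pi$ descends to a representation of the discrete abelian group $H = G/K$.

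Next I would invoke the $\ell_c$ hypothesis to show that $H$ is a torsion group. Given any $g \in G$, the $\ell_c$ property provides a compact open subgroup containing $g$, and since any two compact open subgroups are contained in a common third, we may enlarge it to a compact open $L$ with $K \subset L$ and $g \in L$. Then $L/K$ is a finite group (as $K$ is open in the compact $L$), so the image of $g$ in $H$ has finite order. Hence $H$ is torsion.

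Finally I would treat $V$ as an irreducible module over the commutative $\C$-algebra $\C[H]$. For any nonzero $v \in V$, irreducibility gives $V = \C[H]\cdot v$, so $V \cong \C[H]/\mathfrak{m}$ as $\C[H]$-modules for some maximal ideal $\mathfrak{m}$. The quotient $\C[H]/\mathfrak{m}$ is a field extension of $\C$ generated by the images of elements of $H$, and each such image satisfies $x^n - 1 = 0$ for some $n \geq 1$ because $H$ is torsion. Since $\C$ contains every root of unity, each generator already lies in the copy of $\C$, forcing $\C[H]/\mathfrak{m} = \C$ and thus $\dim_\C V = 1$.

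The main obstacle is psychological rather than technical: one must resist the temptation to invoke Schur's lemma (which would require tameness, the very thing we are proving) or to assume $V$ is finite-dimensional. The argument sidesteps both points by passing through the concrete cyclic description $V \cong \C[H]/\mathfrak{m}$ and using only the commutativity of $\C[H]$ together with the algebraic constraint imposed by the torsion structure on $H$.
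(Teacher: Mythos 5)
Your proof is correct and follows essentially the same two-step reduction as the paper: first descend to a representation of the torsion discrete abelian quotient $G/K$ (you make explicit the paper's ``easy to see'' claim by noting $V^K$ is $G$-stable since $G$ is abelian), then use that $\C$ already contains every root of unity. The paper phrases the last step by diagonalizing the finite-order operators $\pi(g)$ and appealing to $G$-stability of eigenspaces, whereas you pass through the field $\C[H]/\mathfrak{m}$; these are two presentations of the same argument.
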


\begin{proof}

Let $G$ be an abelian $\ell_c$-group and pick $\pi \in \widehat{G}$. It is easy to see that $\pi$ factors through a discrete quotient of $G$, and a discrete group is $\ell_c$ if and only if it is torsion. But then for any $g \in G$, the operator $\pi(g)$ has finite order and hence is diagonalizable. Since $G$ is abelian, eigenspaces are $G$-stable, so by irreducibility of $\pi$ it follows that $G$ acts on the space of $\pi$ by scalars, whence the space must be one-dimensional.

\end{proof}

Of course this lemma gives us Schur's lemma trivially, without invoking Proposition \ref{schur}.

Next we state and prove the key result which, along with the theory of Fourier transforms on locally compact abelian groups, informs the proof of Theorem \ref{rodier}. If $X$ is any topological space, we write $C^{\infty}_c(X)$ for the algebra of locally constant complex-valued functions on $X$ with compact support under pointwise multiplication. Since this algebra is unital only when $X$ is compact, it is necessary to consider $C^{\infty}_c(X)$-modules which are nondegenerate as defined in subsection 1.6.

\begin{theorem}

Let $X$ be an $\ell$-space. The category of nondegenerate $C^{\infty}_c(X)$-modules is equivalent to the category $\Sh(X)$ of sheaves of complex vector spaces on $X$.

\label{modsheaf}

\end{theorem}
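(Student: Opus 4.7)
The plan is to exhibit mutually inverse functors between $\Sh(X)$ and the category of nondegenerate $C^{\infty}_c(X)$-modules. In one direction I would send a sheaf $\mathcal{F}$ to $\Phi(\mathcal{F}) = \Gamma_c(X,\mathcal{F})$. To equip this with a $C^{\infty}_c(X)$-action I would first observe that any sheaf of complex vector spaces on an $\ell$-space carries a canonical module structure over the sheaf of locally constant functions: given a locally constant $f$ on an open $U$, decompose $U$ into the clopen level sets of $f$, scale sections on each piece by the corresponding constant, and glue. Passing to compactly supported global sections yields the required $C^{\infty}_c(X)$-module structure, and nondegeneracy follows because every $s \in \Gamma_c(X,\mathcal{F})$ satisfies $\mathbbm{1}_K \cdot s = s$ for any compact open neighborhood $K$ of $\Supp s$, and such a $K$ exists because $X$ is an $\ell$-space.

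Going the other way will require more care. For a nondegenerate module $M$, I would define a presheaf $\widetilde{M}$ on the base $\mathcal{B}$ of compact open subsets by $\widetilde{M}(K) = \mathbbm{1}_K \cdot M$, with restriction along $K \subset K'$ given by multiplication by $\mathbbm{1}_K$. Verifying the sheaf axiom on $\mathcal{B}$ reduces to the case of a finite disjoint cover $K = K_1 \sqcup \cdots \sqcup K_n$ by compact opens, since compact open subsets of an $\ell$-space are clopen and one can disjointify any finite cover by replacing each $K_{i+1}$ with $K_{i+1} \setminus (K_1 \cup \cdots \cup K_i)$. For such a disjoint cover the condition is immediate from $\mathbbm{1}_K = \sum \mathbbm{1}_{K_i}$ together with the orthogonality $\mathbbm{1}_{K_i} \cdot \mathbbm{1}_{K_j} = 0$ for $i \neq j$. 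Standard sheaf-on-a-base theory then produces the desired sheaf $\Psi(M)$ on $X$, functorially in $M$.

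To see that $\Phi$ and $\Psi$ are inverse, I would check agreement on the base. For $\Phi \circ \Psi$ applied to $M$, the compactly supported global sections of $\Psi(M)$ form the increasing union $\bigcup_K \mathbbm{1}_K \cdot M$ over compact open $K$, which equals $M$ by nondegeneracy. For $\Psi \circ \Phi$ applied to $\mathcal{F}$, on a compact open $K$ the subspace $\mathbbm{1}_K \cdot \Gamma_c(X,\mathcal{F})$ consists of globally compactly supported sections vanishing outside $K$, and since $K$ is clopen extension by zero identifies this with $\Gamma(K,\mathcal{F})$ naturally in $\mathcal{F}$. Hence the two sheaves agree on the base, and uniqueness of sheafification promotes this to a natural isomorphism $\Psi(\Phi(\mathcal{F})) \cong \mathcal{F}$.

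The main obstacle, in my view, is not any single deep step but the careful bookkeeping needed to show that the isomorphisms $\mathbbm{1}_K \cdot \Gamma_c(X,\mathcal{F}) \cong \Gamma(K,\mathcal{F})$ and $\bigcup_K \mathbbm{1}_K \cdot M \cong M$ are natural and compatible with the base restriction maps. Once those details are arranged cleanly, the equivalence of categories follows formally from the sheaf-on-a-base principle.
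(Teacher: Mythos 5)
Your construction agrees with the paper's up to canonical isomorphism: the paper defines the sheaf on the base of compact opens by $K \mapsto M/\mathfrak{m}_K M$, where $\mathfrak{m}_K$ is the ideal of functions vanishing on $K$, whereas you use $K \mapsto \mathbbm{1}_K \cdot M$. For a nondegenerate module these coincide, since the idempotent $\mathbbm{1}_K$ splits $M = \mathbbm{1}_K M \oplus \ker(\mathbbm{1}_K)$ and $\ker(\mathbbm{1}_K) = \mathfrak{m}_K M$. The inverse functor $\mathcal{F} \mapsto \Gamma_c(X,\mathcal{F})$, the disjointification of finite covers of a compact open to check the sheaf axiom, and the identification $\mathbbm{1}_K \Gamma_c(X,\mathcal{F}) \cong \Gamma(K,\mathcal{F})$ via extension by zero along a clopen subset are all exactly the moves the paper makes. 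So at the level of architecture the two proofs are the same.

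Where you genuinely depart, and improve, is in how injectivity of the unit $M \to \Gamma_c(X,\Psi(M))$ is handled. Because you present the sections over $K$ as the direct summand $\mathbbm{1}_K M$ rather than as the quotient $M/\mathfrak{m}_K M$, the map $M \to \Gamma_c(X,\Psi(M))$ is visibly injective: if $m$ maps to the zero section, pick $K$ with $\mathbbm{1}_K m = m$ (nondegeneracy) and observe $m = \mathbbm{1}_K m = 0$. The paper, working with the quotient presentation, instead tests injectivity on stalks $M/\mathfrak{m}_x M$, reduces to compact $X$, and then appeals to the commutative-algebra fact that an element vanishing in every localization $A_{\mathfrak{m}_x}$ is zero. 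Your formulation sidesteps that entire step. This is a real, if modest, simplification, and it is worth knowing that the idempotent picture makes the one delicate verification trivial. The remaining bookkeeping you flag (naturality and compatibility with base restriction maps) is straightforward and is treated at about the same level of detail in the paper.
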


\begin{proof}

We will write $\mathfrak{m}_x$ for the maximal ideal of $C^{\infty}_c(X)$ consisting of functions which vanish at a fixed $x \in X$, and similarly $\mathfrak{m}_K$ denotes the ideal of functions which vanish on some compact open subset $K \subset X$. Given a nondegenerate $C^{\infty}_c(X)$-module $M$, it is natural to consider the vector space $M/\mathfrak{m}_KM$: this will be the space of sections over $K$. It is not hard to see that the associated presheaf whose sections locally have this form (recall that an $\ell$-space has a basis of compact open subsets) is a sheaf, which we call $\widetilde{M}$. Indeed, any compact open cover of a compact open set has a finite refinement consisting of pairwise disjoint compact open sets, so locally patching is easy. The stalk of $\widetilde{M}$ at a point $x \in X$ is just the space $M/\mathfrak{m}_xM$. It is not hard to check that the mapping $M \mapsto \widetilde{M}$ extends to a functor from the category of nondegenerate $C^{\infty}_c(X)$-modules to $\Sh(X)$, and we claim that this functor is an equivalence.

First, observe that $M \mapsto \widetilde{M}$ has a right adjoint which sends a sheaf $\mathcal{F} \in \Sh(X)$ to its space $\Gamma_c(X,\mathcal{F})$ of compactly supported global sections, with the obvious $C^{\infty}_c(X)$-action: the level sets of any $f \in C^{\infty}_c(X)$ are open. The unit morphisms $M \to \Gamma_c(X,\widetilde{M})$ are defined as follows: given $m \in M$, choose a compact open subset $K \subset X$ which contains $\Supp m = \{ x \in X \ | \ m \notin \mathfrak{m}_xM \}$, or equivalently $\mathbbm{1}_K \cdot m = m$. Then pass to the quotient $M/ \mathfrak{m}_K M = \widetilde{M}(K)$, from which we obtain $\widetilde{m} \in \Gamma_c(X,\widetilde{M})$ by extending by zero outside of $K$. This construction is easily seen to be independent of the choice of $K$. Now pick $\mathcal{F} \in \Sh(X)$: the counit is a morphism of sheaves, so we define it locally (over compact open sets). For any compact open subset $K \subset X$, consider the restriction map $\Gamma_c(X,\mathcal{F}) \to \mathcal{F}(K)$. This vanishes on $\mathfrak{m}_K \Gamma_c(X,\mathcal{F})$, and is clearly compatible with the restriction maps of $\mathcal{F}$ and $\widetilde{\Gamma_c(X,\mathcal{F})}$. We leave to the reader the verification that the unit-counit relations are satisfied.

But in fact the unit and counit are isomorphisms. It is clear enough that $M \to \Gamma_c(X,\widetilde{M})$ is surjective: if $s \in \Gamma_c(X,\widetilde{M})$ choose compact open $K \subset X$ containing $\Supp s$ and pick a representative $m \in M$ for $s|_K \in \widetilde{M}(K) = M/ \mathfrak{m}_K M$, then note that $\mathbbm{1}_K \cdot m$ is sent to $s$ by checking locally. Injectivity of the unit is much harder: observe that it suffices to prove that for $m \in M$, we have $\Supp m = \varnothing$ if and only if $m = 0$. The nontrivial part is showing that if $m \in \mathfrak{m}_xM$ for all $x \in X$, then $m = 0$. First pick a compact open subset $K \subset X$ such that $\mathbbm{1}_K \cdot m = m$, which we know is possible because $M$ is nondegenerate. Then we can identify $\mathbbm{1}_K \cdot C^{\infty}_c(X)$ with the (unital) algebra $C^{\infty}_c(K)$, and the maximal ideals of $C^{\infty}_c(K)$ all have the form $\mathbbm{1}_K \cdot \mathfrak{m}_x$ for $x \in K$. Moreover, $\mathbbm{1}_K \cdot M$ is a (unital) module for $C^{\infty}_c(K)$, and we just need to show $m = \mathbbm{1}_K \cdot m = 0$, so by all of this we are reduced to the case that $X$ is compact. Thus we are dealing with an ordinary unital, commutative $\C$-algebra $A = C^{\infty}_c(X)$, and it is not hard to see that for each $x \in X$ the localization map $A \to A_{\mathfrak{m}_x}$ is surjective with kernel $\mathfrak{m}_x$, so we have an isomorphism $A/\mathfrak{m}_xA \to A_{\mathfrak{m}_x}$ (actually both are canonically identified with $\C$). But it is a well-known fact of commutative algebra that if $m$ is zero in every localization $M_{\mathfrak{m}_x}$ then $m = 0$.

As for the counit, notice first that that the canonical maps $\Gamma_c(X,\mathcal{F}) \to \mathcal{F}_x$ for $x \in X$ are all surjective since $X$ has a basis of compact open subsets. This shows that the counit is an epimorphism, and it is just as clear that if $s \in \Gamma_c(X,\mathcal{F})$ goes to zero in $\mathcal{F}_x$ then $s \in \mathfrak{m}_x\Gamma_c(X,\mathcal{F})$. Thus the counit induces an isomorphism on stalks, so it is an isomorphism.

\end{proof}

With this out of the way, we are in a position to prove Theorem \ref{rodier}.

\begin{proof}[Proof of Theorem \ref{rodier}]

Let $G$ be an abelian $\ell_c$-group. First, recall that $\mathcal{R}(G)$ is isomorphic to the category of nondegenerate $\mathcal{H}(G)$-modules. The Fourier transform provides an isomorphism of algebras $\mathcal{H}(G) \to C^{\infty}_c(\widehat{G})$, so $\mathcal{R}(G)$ is in fact isomorphic to the category of nondegenerate $C^{\infty}_c(\widehat{G})$-modules. Now the $\ell_c$ assumption on $G$ implies that $\widehat{G}$ is totally disconnected and hence an $\ell$-space, so we can apply Theorem \ref{modsheaf} to see that the category of nondegenerate $C^{\infty}_c(\widehat{G})$-modules is equivalent to the category $\Sh(\widehat{G})$ as desired.

\end{proof}

We point out that by examining the proofs of Theorem \ref{modsheaf} and Theorem \ref{rodier}, we can obtain a simple description of the stalk of a representation $(\pi,V) \in \mathcal{R}(G)$ at a fixed $\chi \in \widehat{G}$. If we let $V_{\chi} \subset V$ be the $\C$-span of the vectors $\{ \pi(g)(v) - \chi(g) \cdot v \ | \ g \in G, v \in V \}$ (i.e. the $\chi$-eigenspace of $V$), then the stalk of $(\pi,V)$ at $\chi$ is identified with $V/V_{\chi}$, the space of $\chi$-coinvariants.

\end{document}